\documentclass[oneside, 11pt,reqno,tbtags]{amsart}

\usepackage{amsthm,amssymb,amsfonts,amsmath}
\usepackage{amscd} 
\usepackage{mathrsfs}
\usepackage{appendix}
\usepackage[numbers, sort&compress]{natbib} 
\usepackage{graphicx}
\usepackage{vmargin}
\usepackage{setspace}
\usepackage[colorlinks=true, citecolor=cyan]{hyperref}
\usepackage{xspace}
\usepackage{array}
\usepackage[dvipsnames]{xcolor}
\usepackage{paralist}
\usepackage{color}
\usepackage[normalem]{ulem}
\usepackage{stmaryrd} 
\usepackage[refpage,noprefix,intoc]{nomencl} 
\usepackage{tcolorbox}
\usepackage{bbm,mathtools,tikz}
\usepackage{xcolor}
\usepackage{MnSymbol,wasysym}

\providecommand{\Z}{}
\providecommand{\N}{}

\renewcommand{\Z}{\mathbb{Z}}
\renewcommand{\N}{{\mathbb N}}

\renewcommand{\P}{\mathbb{P}}

\DeclareMathAlphabet{\mathmybb}{U}{bbold}{m}{n}
\newcommand{\one}{\mathmybb{1}}

\newcommand\cD{\mathcal D}

\newcommand\cP{\mathcal P}

\newcommand\cT{{\mathcal T}}

\usepackage{crossreftools,booktabs}

\makeatletter
\newcommand{\optionaldesc}[2]{%
  \phantomsection
  #1\protected@edef\@currentlabel{#1}\label{#2}%
}
\makeatother

\newcommand{\rd}{\mathfrak{d}}
\newcommand{\rp}{\mathfrak{p}}
\renewcommand{\rm}{\mathfrak{m}}

\newcommand{\rt}{\mathfrak{t}}
\renewcommand{\rq}{\mathfrak{q}}

\newcommand{\BijPos}{\ensuremath{\varphi}}
\newcommand{\BijTree}{\ensuremath{\psi}}

\providecommand{\ora}[1]{}
\renewcommand{\ora}[1]{\overrightarrow{#1}}
\DeclareRobustCommand{\SkipTocEntry}[5]{} 

\newtheorem{theo}{Theorem}
\newtheorem{lemm}[theo]{Lemma}
\newtheorem{prop}[theo]{Proposition}

\newtheorem{defi}[theo]{Definition}
\newtheorem{rema}[theo]{Remark}

\makenomenclature										
\graphicspath{ {./images/} }
\usepackage{wrapfig}

\numberwithin{equation}{section}
\numberwithin{theo}{section}

\begin{document}
\date{} 

\title{A tree bijection for Eulerian orientations on quartic maps}

\author{Marie Albenque}
\address{CNRS -- Université Paris Cité, Paris, France.}
\email{malbenque@irif.fr}

\author{Enrica Duchi}
\address{Université Paris Cité, Paris, France}
\email{duchi@irif.fr}

\author{Juliette Schabanel}
\address{LaBRI -- Université de Bordeaux, Bordeaux, France}
\email{jschaban@phare.normalesup.org}

\subjclass[2010]{X} 

\begin{abstract} 
In this paper we construct a recursive bijection between $4$-valent maps endowed with a Eulerian orientation -- also known as the ice model --  and a family of unary-binary trees. This answers a question of Bousquet-Mélou and Elvey Price, who established that both these families have the same generating function in \cite{MBM_AEP_25}.

\end{abstract} 

\maketitle

\section{Introduction}

Since the work of Tutte in the sixties~\cite{Tutte62, Tutte63}, the enumeration of planar maps (that are graphs embedded in the sphere) has grown into a classical and active area of research. Using a method based on recursive decompositions, Tutte obtained many closed enumerative formulas for various families of maps characterized by degree or girth constraints. The striking simplicity of these formulas called for a direct combinatorial interpretation, and starting from the pioneering work of Schaeffer~\cite{Schaeffer97, PhDSchaeffer}, many bijections have been constructed between some families of decorated trees and planar maps~\cite{BDG02, BDG04, PS06, BF12}. 
These bijections not only give a satisfactory explanation of why the enumerative formulas are so simple, but they were also instrumental in the study of the scaling limit of random planar maps and in the definition of the Brownian sphere, as a universal limiting object for many families of maps~\cite{Miermont, LeGall}.

Maps have also appeared independently quite early in the physics literature~\cite{tHooft,BIPZ}, through the study of matrix integrals, and then as a discrete approximation of the 2-dimensional quantum gravity. In the latter point of view, it makes sense to study \emph{decorated maps}, which are maps with an additional structure coming either from a statistical physics perspective, such as the Ising model~\cite{BoulatovKazakov}, or from a more graph-theoretic perspective, such as maps with a coloring of their vertices~\cite{TutteColorTriang}, or with a distinguished spanning tree~\cite{Mullin}.

In this paper, we consider maps equipped with a \emph{Eulerian orientation}, that is an orientation of its edges such that the indegree and the outdegree of each vertex are equal. More precisely, we focus on maps where all vertices have degree 4 -- such maps are called quartic -- and in this setting a Eulerian orientation is the counterpart for maps of the ice model defined on the square lattice by Pauling in 1935. In the combinatorics literature, this model was first studied by Bonichon et al~\cite{Bonichon}, who obtained several bounds for the number of maps equipped with a Eulerian orientation. It was subsequently studied in~\cite{AEP_TG}, where the authors computed many initial coefficients and, from these data, conjectured an unusual  asymptotic behaviour for the enumerative sequence. The enumeration problem was eventually solved in~\cite{MBM_AEP_20}. In a later work~\cite{MBM_AEP_25}, the authors refined these results and conjectured the existence of a bijection between quartic Eulerian orientations and a family of suitably charged unary-binary trees. 

We prove this conjecture by constructing a recursive bijection between labeled quadrangulations, which are dual to quartic Eulerian orientations (see Figure \ref{fig:LQvsQO}), and this family of trees. This bijection is similar to Schaeffer's decomposition trees \cite{PhDSchaeffer}, which is a bijection between (unoriented) planar maps and labeled unary-binary trees. It coincides with our bijection when restricted to unlabeled quadrangulations.
Aside from providing a combinatorial explanation to some equations established in~\cite{MBM_AEP_25}, our bijection also opens the door to efficient sampling of labeled quadrangulations through Boltzmann samplers (see Section~\ref{sec:discuss}).

\begin{figure}[ht]
  \centering 
  \includegraphics[height=4.2cm,page=12]{Figures/Bijection_Recursive.pdf}
  \caption{A quartic Eulerian orientation (in teal plain edges) and its dual labeled quadrangulation (in purple dashed edges). The labeling rule is shown on the right.}
  \label{fig:LQvsQO}
\end{figure}

It is natural to ask whether we could obtain a non-recursive bijection; and indeed, we have a way to construct directly the tree by exploring the faces of the quadrangulation. It should also be possible to describe a process to build the map starting from the tree. However, both constructions remain recursive in essence: the proofs that they are indeed inverse bijections still rely on recursive arguments. Thus, the search for a genuinely non-recursive bijection remains open; this point is also discussed in Section~\ref{sub:leaves}.

Let us conclude this introduction by mentioning some recent related works, and how they relate to our result. In~\cite{MBM_AEP_20}, Bousquet-Mélou and Elvey Price also enumerated quartic Eulerian orientation without alternating vertices, and conjectured that this subclass is in bijection with a family of decorated binary trees. Note that this subclass corresponds by duality to so-called \emph{colorful quadrangulations}, which have been studied recently in the mathematical physics literature~\cite{Budd25}, motivated by some connections with \emph{Jackiw-Teitelboim gravity}. In~\cite{Zon25}, Zonneveld partially answered the conjecture of~\cite{MBM_AEP_20} by constructing a bijection between colorful quadrangulations and a family of trees.  However, there is no clear bijection between his trees and the simpler model proposed by Bousquet-Mélou and Elvey Price. Note that the restriction of our bijection to colorful quadrangulations gives yet another family of trees, which is closer to the family described in~\cite{MBM_AEP_20}, but still without any explicit bijection.

\textbf{Outline of the paper.} In Section~\ref{sec:def}, we begin with definitions on maps, orientations and labelings. In Section~\ref{sec:positive}, we recall a bijection between unlabeled quadrangulations and some unary-binary trees, which we generalize to labeled quadrangulations in Section~\ref{sec:bij recursive}. The approach will be to recursively decompose the labeled map, and encode the decomposition into unary-binary tree. Finally, in Section~\ref{sec:discuss}, we explain how our bijection allows us to recover more directly some previously known equations on labeled quadrangulations and how we can use it to sample labeled quadrangulations; and a second subsection is then dedicated to a discussion about statistics. We conclude the paper with an appendix in which we describe a procedure to directly obtain the tree by exploring the faces of the quadrangulation. 

\textbf{Acknowledgments} 
The authors thank Mireille Bousquet-Mélou and Andrew Elvey-Price for many insightful discussions about this problem, which they first raise in an open problem session in the kickoff meeting of the ANR grant CartesEtPlus. The authors also thank Dominique Poulalhon for many inspiring discussions on this problem.

This work was partially funded by the ANR grants IsoMa (ANR-21-CE48-0007) and CartesEtPlus (ANR-23-CE48-0018).

\section{Definitions}
\label{sec:def}

\subsection{Unary-binary trees and well-charged trees}

A \emph{unary-binary} tree $\rt$ is a tree where all vertices have either 0, 1 or 2 children. Vertices with $0$ children are called \emph{leaves} and the other vertices \emph{inner vertices}. We denote by $V(\rt)$ the set of inner vertices of $\rt$. The \emph{size} $|\rt|$ of $\rt$ is defined as its number of leaves minus one (or equivalently as its number of binary inner vertices).

The \emph{charge} $c_\rt$ of a unary-binary tree $\rt$ is the difference between its number of binary vertices and its number of unary vertices. Moreover, for $u\in V(\rt)$, write $\rt_u$ for the subtree of $\rt$ rooted at $u$. Then, the charge of $\rt$ at $u$ -- denoted by $c_\rt(u)$, or $c(u)$ if $\rt$ is clear from the context -- is defined as $c_{\rt_u}$.

A unary-binary tree $\rt$ is said to be \emph{well-charged} if for all $u \in V(\rt)$, $c_\rt(u) \neq 0$. For $k\in \mathbb{Z}$, we define $\cT^{(k)}$ as the family of well-charged trees with root vertex of charge $k$. Observe that if such a tree has $n+1$ leaves then it must have $n$ binary vertices and $n-k$ unary vertices. By convention, the only tree of $\cT^{(0)}$ is the tree reduced to a single leaf.

A leaf is said to be \emph{left} (resp. \emph{right}, resp.\emph{unary}) if it is the left child of a binary vertex (resp. right child, resp. child of a unary vertex).

\begin{figure}[ht]
  \centering 
  \includegraphics[height = 4cm,page=1]{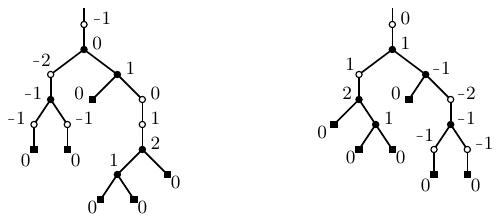}
  \label{fig:UnBinTree}
  \caption{Two unary-binary trees of size $5$. The labels indicate the charges at each vertex. Left: A unary-binary (not well-charged). Right: A well0charged tree with $3$ left leaves, $1$ right leaf and $2$ unary leaves.}
\end{figure}

\subsection{Labeled quadrangulations}

A \emph{planar map} $\rm$ is a proper embedding of a connected planar graph onto the sphere, considered up to orientation preserving homeomorphisms. Loops and multi-edges are allowed. The vertices and edges of the maps are those of the graph and its faces are the connected components of its complement. Their sets are respectively denoted by $V(\rm)$, $E(\rm)$ and $F(\rm)$.
The numbers of vertices, edges and faces of a planar map M, denoted by $v(\rm)$, $e(\rm)$ and $f(\rm)$, are related by Euler’s formula $v(\rm) + f(\rm) = e(\rm) + 2$. The \emph{degree} of a vertex or face is the number of edges it is adjacent to, counted with multiplicity. A \emph{corner} is an angular sector formed by two consecutive edges around a vertex, we denote by $C(\rm)$ the set of corners.

All our maps will be \emph{rooted}, that is they have a distinguished oriented edge or equivalently a distinguished corner. The tail and the head of the root edge are respectively called the \emph{root vertex} and the \emph{co-root vertex}. The face to the right (resp. to the left) of the root edge is called the \emph{root face} (resp. the \emph{co-root face}). In all our figures, the root face will be represented as the outer infinite face. 

A \emph{quadrangulation} is a map where all the faces have degree 4. A \emph{quasi-quadrangulation} is a map where all faces except the root face have degree $4$, observe that the root face of a quasi-quadrangulation must have an even degree.

A \emph{labeled map} $(\rm,\ell)$ is a map together with a labeling $\ell$ of its vertices, such that: 
\vspace{-0.1cm}
\begin{itemize}
\item For any $v\in V(\rm)$, $\ell(v)\in \mathbb{Z}$,
\item For any $u,v \in V(\rm)$ such that $\{u,v\}\in E(\rm)$, we have $\ell(v)-\ell(u)\in\{-1,1\}$,
\item The root vertex and co-root vertex are respectively labeled by $0$ and $1$. 
\end{itemize}
Given a labeled map $(\rm,\ell)$, we extend the definition of $\ell$ to $C(\rm)$, by defining the label of a corner as the label of its incident vertex. 

Note that the definition of a labeled map readily implies that the underlying map is \emph{bipartite}. This will always be the case in this article, since all the maps considered have only faces of even degrees. 

\begin{rema}
    As mentioned in the introduction, labeled quadrangulations are in bijection with quartic Eulerian orientations via duality. For the sake of completeness, let us recall this construction (see Figure~\ref{fig:LQvsQO}). The \emph{dual} of a quartic Eulerian orientation $\rq$ is the quadrangulation $\rm$ obtained by placing a vertex in each face of $\rq$ and, for each edge $e$ of $\rq$, an edge between the vertices in the two faces it separates. The root edge of $\rm$ is then the edge corresponding to the root edge of $\rq$, oriented from its left to its right. The labeling is then obtained by assigning label $0$ to the root vertex of $\rm$, and propagating the labeling in such a way that for each edge of $\rm$, the endpoint with the larger label lies on the right of the oriented edge of $\rq$ it crosses. The Eulerian condition on the orientation of $\rq$ guarantees that this labeling is globally well defined. Observe that, since quadrangulations are bipartite, there are only two possible label patterns for a face of the quadrangulation: $(i, i+1, i+2, i+1)$ and $(i, i+1, i, i+1)$, which correspond to the two possible orientations around the associated vertex of $\rd$: out-out-in-in or out-in-out-in. Conversely, the labeling determines the orientation uniquely, and by the observation above the orientation obtained is necessarily Eulerian.
\end{rema}

\subsection{Result}

In~\cite{MBM_AEP_20}, Bousquet-Mélou and Elvey Price established in a purely computational way that the number of elements of $\cT^{(1)}$ of size $n+1$ is equal to the number of labeled quadrangulations with $n$ faces. The main result of this article is to give a bijective proof of this result, namely we prove:

\begin{theo}\label{thm:main}
There exists an explicit (recursive) bijective correspondence between elements of $\cT^{(1)}$ with size $n+1$ and the set of labeled quadrangulations with $n$ faces.
\end{theo}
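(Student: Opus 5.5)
The plan is to build the bijection recursively, following the outline of Schaeffer's decomposition trees. The first step is to enlarge the class of objects so that the recursion closes. Concretely, I would consider labeled quasi-quadrangulations whose root face has degree $2k$ and whose root-face boundary has a prescribed label profile. The natural parameter to track is the charge $k$: it should correspond to well-charged trees in $\cT^{(k)}$. The goal is a family of bijections $\Phi_k$ between $\cT^{(k)}$, with $k\ge 1$, and labeled quasi-quadrangulations with root face of degree $2k$ satisfying a suitable boundary condition. Negative charges would be handled by the symmetry $\ell\mapsto 1-\ell$ combined with reversing the root, or by a companion class. Theorem~\ref{thm:main} is then the case $k=1$: a root face of degree $2$ is a doubled edge, and collapsing it yields a quadrangulation with $n$ faces. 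The size shift from $n+1$ to $n$ comes from this closing step.

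\textbf{Step 1 (root decomposition).} I would first check the unlabeled case recalled in Section~\ref{sec:positive}. Given a quasi-quadrangulation with root edge $e$, look at the inner face $f$ incident to $e$. Removing $e$ merges $f$ into the root face, raising its degree by $2$; this is a unary step and the charge drops by one. Alternatively, removing $e$ disconnects the map, or $f$ is degenerate, and the map splits into two pieces whose root-face degrees add up; this is a binary step and the charges add. The well-charged condition $c(u)\neq 0$ should correspond exactly to the constraint that no intermediate object has a root face of degree $0$.

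\textbf{Step 2 (labels).} Next I would incorporate the labels. The two face types $(i,i+1,i+2,i+1)$ and $(i,i+1,i,i+1)$ must be distinguished. Deleting the root edge shifts the root-face boundary, and the new root must be re-chosen so that root and co-root again carry labels $0,1$ after a global shift. When several choices are available, the extra information is encoded in which child is the left one and which is the right one. This is why leaves of different types (left, right, unary) appear, and why negative charges are needed. The charge of a subtree should equal half the "net label winding" along the root face, meaning the number of up-steps minus down-steps measured from the root corner. I would define it this way and prove that it is invariant under the decomposition.

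\textbf{Step 3 (inverse and counting).} For the inverse direction, I would rebuild the map from a tree by induction on size. A unary vertex glues a new quadrangle onto the root edge. A binary vertex joins two maps at their root corners. Checking that the result lands in the right class, and that the two constructions are mutually inverse, is then a routine induction. As a sanity check, the statistics should match: $n$ binary vertices and $n-k$ unary vertices, so each face is created by a unary step, up to boundary bookkeeping.

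The main obstacle is Step 2. I need to find the correct boundary invariant, meaning which labeled boundaries correspond to charge $k$, so that the case analysis is exhaustive and deterministic. In particular, I must ensure that the choice of new root after deleting an edge is forced in exactly one way, or in exactly two ways matching left and right. My first attempt would be to root the new map at the first corner, clockwise from the old root, where the label sequence returns to its minimum. Then I would check the recursion against the first few coefficients to confirm it.
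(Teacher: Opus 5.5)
Your overall architecture matches the paper's: generalize to a family indexed by the charge, decompose at the root edge, and recover Theorem~\ref{thm:main} at $k=1$ by deleting the non-root edge of a degree-$2$ root face. But the proposal stops exactly where the labeled case becomes nontrivial, and the two ingredients needed to close the recursion are missing.

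The first missing ingredient is the class for negative charge. The reflection $\ell\mapsto 1-\ell$ with the root reversed preserves the root-face degree, so it only permutes the objects of charge $k$ among themselves. A genuinely different family is needed: for a fixed number of leaves, trees of $\cT^{(-k)}$ have $2k$ more inner vertices than those of $\cT^{(k)}$. The paper uses D-patches. These are labeled maps with root face of degree $2$ and inner faces of degree $2$ or $4$, where every digon is incident to the root vertex and every neighbour of the root vertex is labeled $1$; minus the charge is the number of digons. Your proposed charge invariant does not help either. The number of up-steps minus down-steps along the whole root face is always $0$, since the boundary is a closed walk. The paper simply fixes the positive-charge boundary profile to be alternating $0,1$ (patches), so that the charge is half the boundary length.

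The second, more serious gap is in your Step~1. It has only two moves (delete a non-bridge root edge, or split into two pieces), and these do not cover a patch whose co-root face is labeled $(0,1,2,1)$ or $(0,-1,0,1)$. Deleting the root edge then creates a root face carrying a label outside $\{0,1\}$. This is neither a patch of larger boundary nor a pair of smaller patches, and no re-rooting rule such as ``first return to the minimum'' repairs it. The paper handles this case by \emph{subpatch extraction and contraction}. Let $\rp=\Gamma_+(v_0)$ be the maximal connected set of vertices labeled $0,1$ containing $v_0$, together with everything it encloses; it is a patch of boundary $2i$. Contracting it to a single vertex turns the $(0,1,2,1)$-faces along its boundary into digons and produces a D-patch $\rd$ with $j=i-k+1$ digons. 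This gives a binary vertex with one child of positive charge and one of negative charge. An analogous extraction handles D-patches whose co-root face is not incident twice to the root vertex.

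Correspondingly, your Step~3 must replace ``join two maps at their root corners'' by the subpatch expansion. This opens the digons of $\rd$ into copies of its root vertex and glues them to consecutive even corners of $\rp$, with $j=i+1$ forbidden. Also, a unary vertex above a negative-charge child adds a digon rather than a quadrangle. So in the labeled setting faces are not all created by unary steps; only their number matches. Without these two ideas the case analysis is neither exhaustive nor invertible, so the proposal does not yet give the bijection.
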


We in fact prove a stronger result by giving a family of maps in bijection with each of the $\cT^{(k)}$ for $k \in \Z$.

\section{Warm-Up: Fully positive trees and Unlabeled quadrangulation}
\label{sec:positive}

As a warm-up before presenting the full bijection, we start with the simpler setting of unlabeled quadrangulations. These turn out to correspond to a natural subclass of unary-binary trees, which we call \emph{fully positive trees}. The idea of our bijection in this case is to give a ``à la Tutte" recursive decomposition, which is very similar to some construction given by Schaeffer~\cite{PhDSchaeffer}; we revisit it here as a blueprint for the general bijection. 

Note that unlabeled quadrangulations can be seen as a special case of labeled ones, by equipping each map with its unique valid labeling with only $0$ and $1$ as authorized labels. With this identification, the bijection presented in this section will be a particular case of the general one.

\subsection{Fully positive trees}
A unary-binary tree $\rt$ is said to be \emph{fully-positive} if for any $u\in V(\rt)$, $c(u)>0$. Fully-positive trees admit the following equivalent characterization: 

\begin{prop}
\label{prop:pos-leaf}
    For any $k>0$, a tree $\rt \in \cT^{(k)}$ is fully positive if and only if it has no unary leaf, i.e all its leaves are binary.
\end{prop}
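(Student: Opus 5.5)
The proof goes through the charge of a subtree rooted at a unary vertex. First, record the recursion for charges. For an inner vertex $u$, $c(u) = 1 + c(v) + c(w)$ if $u$ is binary with children $v,w$, and $c(u) = c(v) - 1$ if $u$ is unary with child $v$. Here a leaf is given charge $0$, consistent with the convention that the single-leaf tree has charge $0$. Both implications then reduce to looking at a unary vertex and its child.

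($\Rightarrow$) Suppose $\rt$ is fully positive and has a unary leaf, namely a leaf $\ell$ whose parent $u$ is unary. Then $c(u) = c_{\ell} - 1 = -1 < 0$, contradicting $c(u)>0$. Hence all leaves of a fully positive tree are left or right leaves, i.e. children of binary vertices.

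($\Leftarrow$) Suppose $\rt\in\cT^{(k)}$ with $k>0$ and no unary leaf. I will show $c(u)\ge 1$ for every $u\in V(\rt)$. The tree is well-charged, so $c(u)\neq 0$ everywhere, and it suffices to prove $c(u)\ge 0$. I would argue by induction on the height of $u$, i.e. bottom-up, with the strengthened claim that $c(u)\geq 1$ for all inner $u$.
\begin{itemize}
\item If $u$ is binary, its children have charge $\ge 0$: leaves have charge $0$, and inner children have charge $\ge 1$ by induction. Hence $c(u)\ge 1$.
\item If $u$ is unary, its child $v$ is not a leaf by hypothesis, so by induction $c(v)\ge 1$. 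Then $c(u)=c(v)-1\ge 0$, and well-chargedness forces $c(u)\neq 0$, so $c(u)\ge 1$.
\end{itemize}
This completes the induction, and $\rt$ is fully positive. The hypothesis $k>0$ is then automatically consistent; it is needed only to make the statement meaningful, since the root charge must itself be positive.

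There is no real obstacle. The only subtle point is the unary case in ($\Leftarrow$): there the inductive bound drops to $\ge 0$, and it is precisely the well-charged condition $c(u)\neq 0$ that restores strict positivity.
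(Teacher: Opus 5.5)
Your proof is correct and follows the paper's argument. The forward direction is identical, and your bottom-up induction for the converse is the direct form of the paper's minimal-counterexample argument: a lowest inner vertex of negative charge cannot be binary, so it is unary with a child of charge $0$, which must be a leaf, and well-chargedness enters at exactly the same unary step.
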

\begin{proof}
    Let $\rt \in \cT^{(k)}$. If $\rt$ has a unary leaf, then its parent has charge $-1$ and hence $\rt$ is not fully positive.
    
    Conversely, assume $\rt$ has an inner vertex of negative charge and let $u \in V(\rt)$ be such a vertex, with the additional assumption that there is no other inner vertex of negative charge in $\rt_u$. If $u$ were binary, its charge would be $1 + c(u_1) + c(u_2)$ (where we write $u_1$ and $u_2$ for its children) which is positive by assumption on $u$. So $u$ is unary with child $v$, and $c(u) = c(v) -1 <0$. This implies that $c(v) = 0$, and hence $v$ is necessarily a leaf. 
\end{proof}

\begin{theo}
\label{thm:pos-unlabel}
   For all $k, n \geqslant 0$, there is an explicit bijection $\BijPos$ between fully positive trees of charge $k$ with $2n-k$ inner vertices (and so $n+1$ leaves) and (unlabeled) quasi-quadrangulations with $n-k$ quadrangles and boundary of length $2k$ (and so $n+1$ vertices).

    This bijection moreover preserves the following statistics: 
    
    \centering \begin{tabular}{ccc}
         Fully Positive Trees &$\longleftrightarrow$& Quasi-Quadrangulation  \\
         \hline
         Inner vertices &$\longleftrightarrow$& Edges \\
         Unary vertices &$\longleftrightarrow$& Faces \\
         Left leaves &$\longleftrightarrow$& Vertices labeled $1$\\
         Right leaves &$\longleftrightarrow$& Vertices labeled $0$ \\
    \end{tabular}
\end{theo}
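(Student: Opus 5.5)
We build $\BijPos$ by strong induction on the number of inner vertices. The construction is a Tutte-style root-edge decomposition of quasi-quadrangulations, matched with the decomposition of a fully positive tree at its root vertex.

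\textbf{Base case.} The tree reduced to a single leaf (charge $0$, $n=0$) corresponds to the vertex map: one vertex labeled $0$, boundary of length $0$.

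\textbf{Decomposing a map.} Let $\rm$ be a quasi-quadrangulation with boundary of length $2k>0$, rooted on an edge of the root face. Vertices are labeled $0/1$ by bipartition, with the root vertex labeled $0$. Delete the root edge $e$. Two cases arise.
\begin{itemize}
\item If $e$ is a bridge, $\rm$ splits into two quasi-quadrangulations $\rm_1,\rm_2$. Their boundary lengths $2k_1,2k_2$ satisfy $k_1+k_2=k-1$, and their roots are chosen canonically at the corners next to $e$.
\item Otherwise, $e$ separates the root face from an inner quadrangle (or lies twice on the root face, which is the bridge case). Deleting $e$ merges that quadrangle into the root face, giving a quasi-quadrangulation with boundary $2(k+1)$, one fewer face, and one fewer edge.
\end{itemize}

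\textbf{Decomposing a tree.} At the root, a binary vertex has charge $1+c(u_1)+c(u_2)$, and a unary vertex has charge $c(v)-1$. So the two map cases are matched as follows.
\begin{itemize}
\item Bridge case: map to a binary root whose subtrees are $\BijPos^{-1}(\rm_1),\BijPos^{-1}(\rm_2)$, with charges $k_1,k_2\ge0$.
\item Non-bridge case: map to a unary root over $\BijPos^{-1}(\rm')$, of charge $k+1$.
\end{itemize}
Full positivity is exactly what makes the charge at every inner vertex match a positive half-boundary length. Subtrees with $k_i=0$ must be single leaves, which correspond to vertex maps; this is consistent with Proposition~\ref{prop:pos-leaf}, since all leaves are then children of binary vertices.

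\textbf{Bookkeeping.} The statistics follow by induction from the two cases.
\begin{itemize}
\item Each inner vertex removes one edge.
\item Each unary vertex removes one face.
\item Leaves correspond to single-vertex maps. Whether such a vertex is labeled $0$ or $1$ is fixed by which side of the bridge it lies on (left child at the head of $e$, labeled $1$; right child at the tail, labeled $0$), once rootings are chosen so that the root vertex of each piece gets label $0$ or $1$ consistently.
\end{itemize}
The count $2n-k$ of inner vertices then matches edges: with $n+1$ vertices and $n-k+1$ faces, Euler's formula gives $2n-k$ edges.

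\textbf{Main obstacle.} The delicate step is choosing rootings so that the decomposition is invertible and labels propagate.
\begin{itemize}
\item In the bridge case, $\rm_1$ is rooted at the corner at the head of $e$. Its labels are then shifted, since the head has label $1$. This forces a convention in which a tree encodes a map together with a parity of the root label.
\item In the unary case, $\rm'$ is rooted at the boundary edge following $e$ in the merged face. Invertibility requires showing that re-inserting a chord from the root vertex to the vertex at distance $3$ along the boundary always yields a quadrangle, and that this vertex is determined.
\end{itemize}
The first thing to try is to root $\rm'$ so that the new root edge's tail is the old root vertex. Adding the edge back closes a face of degree $4$. Bipartiteness ensures the endpoints have opposite colors, and $k+1\ge 2$ guarantees that the boundary is long enough.
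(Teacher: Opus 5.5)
\textbf{Verdict: the bijection itself is correct and follows the paper, but there is a genuine gap in the leaf-label statistics, caused by your rooting convention for the head-side piece.}

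The agreement with the paper is as follows. You delete the root edge; a bridge gives a binary vertex with the head-side component on the left, and a non-bridge gives a unary vertex. The charge recursion, together with $c(v)=c(u)+1\ge 2$, ensures the chord can be re-inserted.

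The statement also asserts that left leaves correspond to vertices labeled $1$ and right leaves to vertices labeled $0$. This is where your proposal fails. You root $\mathfrak{m}_1$ at the corner at the head $v_1$ of $e$, and you always send the head side to the left. With those two rules the correspondence is false.

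For a counterexample, let $\mathfrak{m}$ be the plane tree with edges $v_0x$, $v_0v_1$, $v_1w$, $v_1w'$, rooted at $(v_0,v_1)$. Then $v_1,x$ are labeled $1$ and $v_0,w,w'$ are labeled $0$.
\begin{itemize}
\item $\mathfrak{m}_1$ is the star centred at $v_1$, rooted at $v_1$. Decomposing it, $w$ and $w'$ are heads of bridges and become left leaves, while $v_1$ becomes a right leaf.
\item $\mathfrak{m}_2$ contributes one more left leaf, $x$.
\end{itemize}
The tree therefore has three left leaves, whereas $\mathfrak{m}$ has only two vertices labeled $1$. Tracking a ``parity of the root label'' does not help while you keep the rule ``left child $=$ head of $e$''. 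Below a root labeled $1$, the head of the root edge carries ambient label $0$, so such a single-vertex head piece becomes a left leaf labeled $0$.

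The missing idea is the paper's rooting convention:
\begin{itemize}
\item $\mathfrak{m}_2$ is rooted at the edge following $e$ counterclockwise around $v_0$.
\item $\mathfrak{m}_1$ is rooted at the edge preceding $e$ counterclockwise around $v_1$, oriented so that $v_1$ is the co-root vertex (the head) of $\mathfrak{m}_1$.
\end{itemize}
With this choice every piece has its root vertex labeled $0$ and its co-root vertex labeled $1$ in the labeling inherited from $\mathfrak{m}$. No shift or parity bookkeeping is needed. A single-vertex left piece is $v_1$, labeled $1$, and a single-vertex right piece is $v_0$, labeled $0$. The leaf statistics then follow by a direct induction. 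The inverse joins the root vertex of the right piece to the co-root vertex of the left piece.

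If you prefer to keep rooting $\mathfrak{m}_1$ at $v_1$, there is an alternative. You must put on the left whichever piece has root vertex of ambient label $1$. That is the tail side whenever the current root is labeled $1$. Parities are then determined top-down by the tree (root $0$, left child $1$, right child $0$, unary child unchanged), so this rule also works. However, it is a different rule from the one you wrote, and you would need to state it and verify it explicitly.
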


\subsection{Proof of Theorem~\ref{thm:pos-unlabel}}
\label{sub:bij pos}

The idea of the proof is to encode the classical Tutte's decomposition of quasi-quadrangulations into unary-binary trees. In the decomposition of quadrangulations, deleting the root $e$ which either gives two smaller maps (when $e$ is a bridge) or a single map whose boundary has two more edges (otherwise). Those two cases will be respectively translated in the tree setting as binary vertices, which have two subtrees, and unary vertices whose unique subtree has charge $1$ higher.

Formally, we define the bijection $\BijTree$ that maps a quadrangulation $\rm$ to a tree $\BijTree(\rm)$ as follows,  see Figure~\ref{fig:corresp positive}:
\begin{enumerate}
    \item If $\rm$ is reduced to a single vertex, $\BijTree(\rm)$ is reduced to a leaf.
    \item 
    If the root edge of $\rm$ is a bridge, let $\rm_0$ be the component containing the root vertex, let $\rm_1$ be the other component, and let $\BijTree(\rm)$ be the tree consisting of a binary root node whose right subtree is $\BijTree(\rm_0)$ and left subtree is $\BijTree(\rm_1)$.
    \item If the root edge is not a bridge, let $\rm'$ be the map obtained by deleting it and let $\BijTree(\rm)$ be the tree obtained by adding a unary vertex to $\BijTree(\rm')$.
\end{enumerate}
Observe that each edge of $\rm$ produces exactly one inner vertex of $\BijTree(\rm)$, and each face produces one unary vertex (when applying construction $(3)$). Moreover, if $\rm$ is not reduced to a single vertex, the tree $\BijTree(\rm)$ has only binary leaves by construction. Indeed, the subtree of a unary vertex is the image of a map with boundary length at least $4$, so it cannot be reduced to a leaf. Hence, by Proposition~\ref{prop:pos-leaf}, the tree $\BijTree(\rm)$ is fully positive for each quasi-quadrangulation $\rm$. Finally, since the root edge is always oriented from $0$ to $1$, $0$-vertices are always mapped to right leaves and $1$-vertices to left leaves (because of the choice in construction $(2)$).

\begin{figure}[ht]
    \centering
    \includegraphics[page=2, height=4.5cm]{Figures/Bijection_Recursive.pdf}
    \caption{The parallel decomposition on fully positive trees and on unlabeled quadrangulations.}
    \label{fig:corresp positive}
\end{figure}

To prove that this construction is bijective, we exhibit the inverse construction $\BijPos$, i.e. the function such that $\BijPos(\BijTree(\rm))=\rm$. It can be constructed naturally by reversing the different cases in the construction of $\BijTree(\rm)$, although some care is needed to ensure that the labels are handled correctly. Formally, $\BijPos$ is defined recursively as follows: 
    \begin{enumerate}
    \item For a right (resp. left) leaf $\ell$, $\BijPos(\ell)$ is a single vertex labeled $0$ (resp. $1$).
    \item If $u$ is a binary vertex with children $u_0$ and $u_1$, $\BijPos(\rt_u)$ is obtained by linking $\BijPos(\rt_{u_0})$ and $\BijPos(\rt_{u_1})$ with an edge going from the root vertex of $\BijPos(\rt_{u_1})$ to the co-root vertex of $\BijPos(\rt_{u_0})$, and by rooting the resulting map on this edge oriented from $0$ to $1$.
    \item If $u$ is a unary vertex with child $v$, then $\BijPos(\rt_u)$ is obtained from $\BijPos(\rt_v)$ by adding an edge from its root corner to the third corner along the boundary (closing a face of degree~$4$).
\end{enumerate}
Observe that the construction in the last item can be performed if and only if the map has boundary length at least $4$. Since the tree is fully positive, and keeping the same notations as above, $c_\rt(u)\geq 1$, and hence we have $c_\rt(v)=c_\rt(u)+1 \geq 2$. To prove that the construction is well defined, it only remains to prove that the charge of $\BijTree(\rm)$ does indeed equal half the boundary length of $\rm$.

We prove it by induction: 
\begin{enumerate}
    \item A leaf has charge $0$ and a single vertex has boundary of length $0$.
    \item If $c(u_0) = k_0$ and $c(u_1) = k_1$ then $c(u) = k_0 + k_1 +1$ and the boundary of $\BijPos(\rt_u)$ has length $2k_0+2k_1+2$ by construction. 
    \item If $c(v) = k$, then $c(u)=k-1$ and the boundary of $\BijPos(\rt_u)$ has length $2k - 3 + 1 = 2(k-1)$. 
\end{enumerate}

Finally, we need to check that the quasi-quadrangulation obtained is properly labeled. It is clear that binary vertices (construction $(2)$) only creates edges between vertices with different labels. For unary vertices (construction $(3)$), since $\BijPos(\rt_v)$ is properly labeled by the induction hypothesis, the labels of the corners of its root face are alternating. This implies that the vertex incident to the third corner after the root corner (which is labeled $0$) is labeled $1$, hence the new edge is also between vertices with different labels.
This concludes the proof.

\section{The recursive correspondence}
\label{sec:bij recursive}

In this section, we extend the bijection of Theorem~\ref{thm:pos-unlabel} beyond the
fully positive setting, in order to treat well-charged trees of arbitrary
charge. 

As in the fully positive case, trees with positive charge are encoded by
labeled maps with a boundary: more precisely, by the \emph{patches}
introduced in~\cite{MBM_AEP_20}, whose boundary length records the
charge. For trees with negative charge, however, this boundary
interpretation is no longer appropriate. Instead, we use a second family
of labeled almost-quadrangulations, also introduced in~\cite{MBM_AEP_20},
called D-patches. In these maps, digons incident to the root vertex are
allowed, and the opposite of the charge of the tree is recorded by the number of
digons of the associated map. At an informal level, these digons can be thought of as a
``negative boundary'' folded inside the quadrangulation.

We recall the definitions of patches and D-patches in
Section~\ref{sec:patches}. This allows us to state
Theorem~\ref{thm:bijRec}, which extends Theorem~\ref{thm:pos-unlabel} to
all well-charged trees. The remainder of the section is devoted to the
proof, based on parallel recursive decompositions of the two classes of
maps and of the corresponding trees.

\subsection{Patches}
\label{sec:patches}
Following the terminology of~\cite{MBM_AEP_20}, we introduce patches and D-patches (see Figure~\ref{fig:Patches}):
\begin{defi}
A \emph{patch} is a labeled quasi-quadrangulation in which the vertices incident to the root face are alternately labeled $0$ and $1$, with the root vertex labeled $0$. For $k \geqslant 1$, we denote by $\cP^{(k)}$ the set of patches with a boundary of length $2k$.

A \emph{D-patch} is a labeled map in which the root face has degree $2$ and each inner face has degree either $2$ or $4$, those of degree $2$ being incident to the root vertex. We also require that all neighbours of the root vertex are labeled $1$. For $k \geqslant 1$, we denote by $\cD^{(k)}$ the set of D-patches with $k$ digons (including the root face).
\end{defi}

\begin{figure}[ht]
  \centering 
  \includegraphics[height = 5cm,page=3]{Figures/Bijection_Recursive.pdf}
  \caption{A patch (left) and a D-patch (right).}
  \label{fig:Patches}
\end{figure}

We can now state our main result : 

\begin{theo}\label{thm:bijRec}
For $k \geqslant 1$, there exists an explicit (recursive) bijective correspondence between :
\begin{itemize}
\item Trees of $\cT^{(k)}$, with $n+1$ leaves,
\item and, patches of outer degree $2k$ with $n+1$ vertices.
\end{itemize}
Moreover, there is an explicit bijective recursive correspondence between: 
\begin{itemize}
\item Trees of $\cT^{(-k)}$, with $n+1$ leaves,
\item and, D-patches with $k$ digons and $n+2$ vertices.
\end{itemize}
Additionally, the bijection induces a correspondence between the edges of the maps and the inner vertices of the trees.
\end{theo}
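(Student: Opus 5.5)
We extend the strategy of Section~\ref{sub:bij pos}. We define simultaneously, by induction on the number of edges, two maps: $\BijTree$ from patches and D-patches to well-charged trees, and its inverse $\BijPos$ from trees to maps. The root vertex of a tree in $\cT^{(k)}$ has charge $k$, and we split according to its type and to the charges of its children. Because the tree is well-charged, every inner vertex $u$ satisfies $c(u)\neq 0$. So when $u$ is unary with child $v$, we have $c(v)=c(u)+1$ and exactly one of the following holds: $c(u)\ge 1$ (then $c(v)\ge 2$); $c(u)=-1$ (then $v$ is a leaf, since $c(v)=0$ forces $v$ to be a leaf); or $c(u)\le -2$ (then $c(v)\le -1$). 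When $u$ is binary with children $u_0,u_1$, the signs of $(c(u_0),c(u_1))$ can be $(+,+)$, $(+,-)$, $(-,+)$, $(-,-)$, or can involve a leaf, subject to $c(u)=1+c(u_0)+c(u_1)\neq 0$.

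\textbf{Map side of each tree case.} We match each tree case with a root-edge operation on the maps.
\begin{itemize}
\item Unary, positive. This is Tutte's operation of closing a quadrangle on the boundary of a patch, exactly as in Section~\ref{sub:bij pos}. The boundary labels stay in $\{0,1\}$, so the new edge joins the root corner, labeled $0$, to a $1$-corner.
\item Unary with $c(u)=-1$, child a leaf. This gives the minimal D-patch: a single edge doubled into a root digon, for $k=1$.
\item Unary, negative. We add a digon at the root vertex of a D-patch, taking $\cD^{(k)}$ to $\cD^{(k+1)}$.
\item Binary, $(+,+)$. This is the bridge gluing of Section~\ref{sub:bij pos}.
\item Binary, mixed or negative signs. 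These require new gluings in which a D-patch is inserted into a patch or into another D-patch. The key idea is that in a D-patch the $-c$ digons can \emph{absorb} boundary edges of a patch. Concretely, we identify the outer boundary of a patch in $\cP^{(a)}$ with part of the digon structure of a D-patch in $\cD^{(b)}$, by zipping $2\min(a,b)$ boundary edges into the digons. The result is a patch in $\cP^{(a-b+1)}$ or a D-patch in $\cD^{(b-a-1)}$, with the extra $+1$ coming from the binary vertex's own edge.
\item Leaves. A leaf corresponds to a single vertex labeled $0$ or $1$.
\end{itemize}
Labels must be shifted appropriately when a D-patch is reused, using the symmetry $\ell\mapsto 1-\ell$ or $\ell\mapsto \ell\pm 1$ to renormalise the root. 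The requirement $c(u)\neq 0$ should correspond exactly to excluding the degenerate gluing where everything cancels and no valid root edge remains.

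\textbf{Steps, in order.}
\begin{enumerate}
\item Classify patches and D-patches by the nature of the root edge. For a patch: is the root edge a bridge; if it is deleted, does the root face stay a patch face, or do labels $2$ or $-1$ appear on the boundary? For a D-patch: is the root face a digon bounded by two parallel edges, and is the root vertex incident to other digons?
\item Check that this classification is exhaustive and disjoint.
\item For each class, define the unique decomposition that removes the root edge and returns one or two smaller objects. The key case is a non-bridge root edge of a patch whose removal breaks the $\{0,1\}$ alternation, for example a face with labels $(0,1,2,1)$. Here I would cut along the outermost vertices of label $2$ (or $-1$) to separate a region that, after relabeling, is a D-patch, with the digons arising from pinching at the root vertex. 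This is where the negative charges enter.
\item Verify by induction that the charge equals half the boundary length for patches, and minus the number of digons for D-patches. This is the same bookkeeping as in the proof of Theorem~\ref{thm:pos-unlabel}, case by case.
\item Check that the counting statistics match. Each inner vertex of the tree creates exactly one edge. Tracking vertex identifications under gluing gives $n+1$ vertices for patches and $n+2$ for D-patches; the extra vertex is the root vertex, which the digon closure shares.
\end{enumerate}

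\textbf{Main obstacle.} The hardest step is the non-bridge case in which deleting the root edge of a patch creates labels outside $\{0,1\}$ on the boundary. We must show that the resulting object canonically splits into a D-patch part and a remainder, and that the gluing map back is injective. I would first settle the smallest cases, $k=1$ with few faces, by hand to fix the conventions: which corner is zipped first, and how labels are shifted. I would then prove invertibility through a "leftmost path of maximal label from the root" argument, which identifies uniquely where the digons were folded in.
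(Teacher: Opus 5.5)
\textbf{Verdict: genuine gap.} Your global strategy is the right one, and it is the paper's: decompose at the root edge in parallel with removing the root of the tree, with charge equal to half the boundary length for patches and minus the number of digons for D-patches. Your unary cases, the $(+,+)$ bridge case and your classification of patches are fine. But the proposal stops exactly where the new work lies, namely every case in which a child has negative charge.

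\textbf{Missing decompositions.} Your Step~1 classifies D-patches by the wrong feature. The root face of a D-patch is always a digon. What must be examined is the co-root face, which is one of three things:
\begin{itemize}
\item a digon (the unary case);
\item a quadrangle $(0,1,0,1)$ visiting $v_0$ twice (the $(-,-)$ case), handled by deleting $e$ and cutting $v_0$ between its two corners on the root face, which gives two D-patches;
\item a quadrangle visiting $v_0$ only once (the mixed case).
\end{itemize}
For the mixed cases the missing idea is \emph{subpatch extraction}. After deleting $e$, take $\rp=\Gamma_+(v_0)$, the maximal $\{0,1\}$-labeled connected component containing $v_0$ together with everything it encloses. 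One checks that the rest of the map meets $\rp$ only at label-$1$ vertices and that its boundary alternates $1,2$. So contracting $\rp$ to one vertex creates only digons at that vertex, all of whose neighbours carry label $2$. After a shift this is a D-patch with $j=i-k+1$ digons. For a D-patch one does the same with $\Gamma_{\pm}(u_0)$ and then merges the contracted vertex with $v_0$. Your heuristic of cutting along label-$2$ vertices and pinching them does not give this. The pinched vertex would in general have neighbours labeled both $1$ and $3$, so the result is not a D-patch. The vertices to identify are the label-$1$ vertices where the remainder touches $\rp$.

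\textbf{The inverse gluing.} ``Zipping boundary edges into the digons'' identifies edges. But the glued map must have exactly $e(\rp)+e(\rd)+1$ edges for the edge/inner-vertex correspondence in the statement to hold. The paper's subpatch expansion identifies only vertices:
\begin{itemize}
\item the root of $\rd$ is split into copies $w_1,\dots,w_j$, which opens the digons;
\item each $w_s$ is merged with the $s$-th even corner of the boundary of $\rp$, creating new quadrangles;
\item a single root edge is then added, with separate treatments for $j>i+1$ and $i\ge j$, and $j=i+1$ excluded, which matches $c(u)\neq 0$.
\end{itemize}
Moreover, each pair $(\rp,\rd)$ has to be glued in two different ways: the labels of $\rp$ are shifted by $-1$ (co-root face $(0,1,2,1)$) or by $+2$ (co-root face $(0,-1,0,1)$). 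This binary choice is exactly what the left/right order of the two children encodes. By lumping $(+,-)$ and $(-,+)$ together you miss this factor $2$.

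\textbf{Injectivity.} Invertibility comes from the canonicity of the subpatch: to undo an expansion one must contract exactly $\rp$. Your ``leftmost maximal path'' argument is left unspecified and does not replace this.
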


\begin{rema}
Note that there is a small subtlety when considering $D$-patches with only $1$ digon (which corresponds then necessarily to the root face). Indeed, such a map can either be considered: 
\begin{itemize}
    \item as a $D$-patch with 1 digon and be matched to a tree of $\cT^{(-1)}$ by the bijection, or,
    \item as a patch of outer degree 2 (in which all the vertices incident to the root vertex are labeled 1) and be matched to a tree of $\cT^{(1)}$ by the bijection.
\end{itemize}
\end{rema}

Before moving to the proof, which will be the object of the subsequent subsection, let us explain how this implies Theorem~\ref{thm:main} and provide a combinatorial explanation to the equation established by Bousquet-Mélou and Elvey Price in \cite[Equation~(98)]{MBM_AEP_25}.

\begin{proof}[Proof of Theorem~\ref{thm:main}]
Patches with outer degree 2 are in bijection with general labeled quadrangulations, as can be seen by deleting the edge which is incident to the root face and different from the root edge. Thus, the case $k=1$ of the theorem gives a bijection between elements of $\cT^{(1)}$ with $n+1$ leaves and labeled quadrangulations with $n+1$ vertices, which is exactly the statement of Theorem~\ref{thm:main}. 

\end{proof}

\subsection{The 8 possible types in the decomposition of maps and trees}

The decomposition of trees is classical. Given an element of $\cT^{(k)}$, we remove its root vertex and analyse the subtrees that arise. The analysis proceeds case by case, according to the sign of $k$, the degree of the root vertex, and the sign of the charges of the resulting subtrees.

Namely, we have to consider 8 different types, see Figure~\ref{fig:casesByCases}: 

\begin{enumerate}[I.]
\item The root vertex has one child with charge
\begin{enumerate}[{I}.1)]
\item $k'>1$ (hence $k>0$),
\item $k'<0$ (hence $k<0$).
\end{enumerate}
\item The root vertex has two children, and writing $k_1$ and $k_2$ for the charge of respectively its left and right subtrees, we have: 
\begin{enumerate}[{II}.1)]
\item $k_1,k_2\geqslant0$ (and hence $k>0$),
\item $k_1,k_2<0$ (and hence $k<0$),
\item $k>0$, and $k_1<0$ and $k_2\geqslant0$, (or symmetrically $k_1\geqslant0$ and $k_2<0$),
\item $k<0$, and $k_1<0$ and $k_2\geqslant0$, (or symmetrically $k_1\geqslant0$ and $k_2<0$).
\end{enumerate}
\end{enumerate}

To prove Theorem~\ref{thm:bijRec}, we similarly partition our family of maps into 8 different types:
\begin{enumerate}[{I}.1)]
\item Patches with outer degree $2k$, such that the root edge is not a bridge, and such that the co-root face is labeled $(0,1,0,1)$.
\item D-patches with $k$ digons, such that the co-root face is a digon. 
\end{enumerate}
\begin{figure}[ht]
  \centering 
  \includegraphics[width=\linewidth,page=4]{Figures/Bijection_Recursive.pdf}
  \caption{The different types on labeled maps and on trees and their decomposition.}
  \label{fig:casesByCases}
\end{figure}

\begin{enumerate}[{II}.1)]
\item Patches with outer degree $2k$, such that the root edge is a bridge, i.e. the root face and the co-root face are equal.
\item D-patches with $|k|$ digons, such that the co-root face is a quadrangle $(0,1,0,1)$, both $0$s being the root vertex. 
\item Patches with outer degree $2k$, such that the co-root face is labeled $(0,1,2,1)$ (or symmetrically $(0,-1,0,1)$).
\item D-patches with $|k|$ digons, such that the co-root face is labeled $(0,1,0,1)$, the second $0$ being different from the root vertex (or symmetrically $(0,1,2,1)$).
\end{enumerate}
Although writing $|k|$ in place of $k$ may appear cumbersome, this notation will be useful below as well-charged trees with negative charge $k<0$ are in bijection D-patches with $|k|$ digons.

In the rest of this section, we will present a case-by-case recursive decomposition for patches and D-patches, and explain how this decomposition can be encoded by a well-charged tree. Moreover, we make sure that, at each step, the types and the value of $k$ of the maps and of the corresponding trees coincide.

In what follows, given a map $\rm$, we always write $e$ for its root edge and $v_0$ and $v_1$ for its root vertex and its co-root vertex, respectively. When the co-root face is a quadrangle, we denote by $u_0$ its other even-labeled vertex and by $u_1$ its other odd-labeled vertex, with possibly $u_0=v_0$ or $u_1=v_1$.

\subsection{Warm up: Types I}
Consider a labeled map $\rm$ of type I. We delete its root edge, and reroot the resulting map $\rm'$ at the following edge in counterclockwise order around its root vertex, see Figure~\ref{fig:caseI}. This case is encoded into a tree by defining $\rt_\rm$ as the tree obtained by attaching $\rt_{\rm'}$ to a unary vertex. 

If $k>0$, then $\rm'$ is a labeled quadrangulation of the $2(k+1)$-gon. Moreover, since the co-root face of $\rm$ is labeled $(0,1,0,1)$, the sequence of labels around the root face of $\rm'$ reads $0,1,\ldots,0,1$, so $\rm'$ is a patch with one less edge than $\rm$. 

To reverse the operation, one simply needs to add back the edge from the root corner of $\rm'$ to the third next corner of the outer face, so as to create a new inner quadrangle, and root the obtained map on the new edge. Notice that this is not possible only when $\rm'$ has a boundary of length $2$. By the induction hypothesis, it corresponds exactly to forbidding $\rt_{\rm'}$ to be of charge $1$. This is exactly the set of well-charged trees with positive charge, which cannot be grafted to a unary vertex (as this would give a tree of charge $0$).

\medskip

If $k<0$, then $\rm'$ has one digon less than $\rm$, and is then a labeled $(-(k+1))$-patch, with one less edge than $\rm$. 

To reverse the operation, we add back the edge from the root corner of $\rm'$ to the next corner of the outer face so as to create a new inner digon, and we root the obtained map on the new edge. In contrast with the situation above, this is always possible, and, indeed, a well-charged tree with a negative charge can always be grafted to a unary vertex.
\begin{figure}[ht]
  \centering 
  \includegraphics[width=\textwidth,page=5]{Figures/Bijection_Recursive.pdf}
  \caption{The decomposition of type I maps.}
  \label{fig:caseI}
\end{figure}

\subsection{Types II.1 and II.2 - Bridges}

To describe the corresponding decomposition for maps, we need to distinguish between type II.1 and type II.2. 

Consider first a labeled map $\rm$ of type II.1, i.e. a patch where the root edge is a bridge. The operation is the same as in  Section~\ref{sec:positive}: by deleting $e$, we decompose $\rm$ into 2 connected components $\rm_1$ and $\rm_2$, where $\rm_2$ is the component that contains $v_0$, and $\rm_1$ is the other one (and contains $v_1$), see Figure~\ref{fig:caseII12}. We reroot $\rm_2$ at the edge that follows $e$ in counterclockwise order around $v_0$, and $\rm_1$ at the edge that precedes $e$ in counterclockwise order around $v_1$.

Both $\rm_1$ and $\rm_2$ are clearly labeled quadrangulations with a $0-1$ boundary. Writing $2k_1$ and $2k_2$ for the degree of their root face, we have that $k=k_1+k_2+1$. 
\begin{figure}[ht]
  \centering 
  \includegraphics[page=6]{Figures/Bijection_Recursive.pdf}
  \caption{The decomposition of type II.1 (top) and II.2 (bottom) maps.}
  \label{fig:caseII12}
\end{figure}

The reverse operation consists in joining $\rm_1$ and $\rm_2$ with an edge between the root of $\rm_2$ and the co-root of $\rm_1$ and rooting the map on it.

\medskip

Now consider a labeled map $\rm$ of type II.2, i.e. a D-patch with a co-root face of degree 4 labeled $(0,1,0,1)$, and which is incident twice to the root vertex, and denote by $|k|$ its number of digons (here $k<0$). After deleting the root edge $e$, there are two corners of the root face that are incident to $v_0$. We write $c$ for the root corner and $c'$ for the other corner. We decompose $\rm$ by cutting through the vertex $v_0$ between the corners $c$ and $c'$, see Figure~\ref{fig:caseII12}. This creates two submaps $\rm_1$ and $\rm_2$, each containing one copy of $v_0$, and $\rm_1$ being the one that contains $v_1$. We root both of them at the cut corner of $v_0$. 

Each map is a D-patch and the digons of $\rm_1$ (resp.$\rm_2$) are the one that were between $c'$ and $c$ (resp. between $c$ and $c'$) around $v_0$, plus their root digon. Write respectively $|k_1|$ and $|k_2|$ for the number of digons of $\rm_1$ and $\rm_2$ (again with $k_1,k_2<0$), so that we have $|k|-1=(|k_1|-1)+ (|k_2|-1)$, and then $k = k_1+k_2+1$.  

The reverse operation consists in merging the root vertices of $\rm_1$ and $\rm_2$ and adding an edge from the former root corner of $\rm_2$ to the co-root vertex of $\rm_1$, enveloping $\rm_2$, and rooting the new map on the new edge.

\medskip

Both types are encoded into a well-charged tree by defining $\psi(\rm)$ as the tree with a binary root vertex with $\psi(\rm_1)$ as its left subtree and $\psi(\rm_2)$ as its right subtree. Assuming by induction that $c_{\psi(\rm_1)} = k_1$ and $c_{\psi(\rm_2)} = k_2$, we have $c_{\psi(\rm)} = c_{\psi(\rm_1)}+c_{\psi(\rm_2)}+1 = k_1+k_2+1 = k$. From the discussions above, this proves that the charge of $\psi(\rm)$ is equal to half the length of the outer face of $\rm$ (for Type II.1) or to minus the number of digons of $\rm$ (for Type II.2).

\subsection{Types II.3 and II.4 - Subpatch extraction and contraction}

The maps of the last types will be decomposed into a patch and a D-patch. To do so, we rely on an operation of \emph{subpatch extraction and contraction} used in \cite{MBM_AEP_20}, which we now describe.

\begin{defi}
    Let $\rm$ be a labeled map. A \emph{$\ell$-subpatch} of $\rm$ is a map obtained by taking a maximal connected component $C$ of vertices labeled $\ell$ and $\ell+1$ as well as all the edges and vertices that lay within its boundary. More formally, deleting $C$ partition the faces of $\rm$ into different connected components, and the $\ell$-subpatch associated to $C$ is obtained from $\rm$ by deleting the component of the root face. 
    
    If $v$ is a vertex of $\rm$ with label $\ell$, we denote respectively by $\Gamma_+(v)$ and $\Gamma_-(v)$ the $\ell$ and $(\ell-1)$ subpatches of $\rm$ containing $v$.
\end{defi}

\begin{figure}[ht]
  \centering 
  \includegraphics[scale=0.8, page=11]{Figures/Bijection_Recursive.pdf}
  \caption{An example of a $(-1)$-patch. The connected component $C$ is represented in bold solid edges and the patch also includes the dashed edges. This patch is equal to $\Gamma_+(u)$ and $\Gamma_-(v)$.}
  \label{fig:ex patch}
\end{figure}

\begin{lemm}
\label{lem:subpatch}
	If $\rm$ is a quasi-quadrangulation, then any $\ell$-subpatch of $\rm$ is a patch up to shifting its labels by $-\ell$. The same holds if $\rm$ is a D-patch provided that the subpatch does not include the root vertex.
\end{lemm}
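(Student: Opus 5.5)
Let $C$ be the maximal connected set of vertices labeled $\ell$ or $\ell+1$ defining the subpatch, and let $\rm_C$ be the map it produces: $C$, the edges between its vertices, and everything lying in the components of $\mathbb{S}^2\setminus C$ other than the one containing the root face of $\rm$. Call that component $R$. Its boundary, seen from $C$, becomes the outer face of $\rm_C$. Three things must be checked. First, every inner face of $\rm_C$ has degree $4$. Second, the outer face of $\rm_C$ has all its corners labeled $\ell$ or $\ell+1$, alternating. Third, after choosing a suitable root corner labeled $\ell$ and shifting labels by $-\ell$, the root and co-root receive labels $0$ and $1$, as required for a labeled map.

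\textbf{Inner faces.} Every face of $\rm_C$ other than the outer one is a face of $\rm$ that is not the root face of $\rm$, since the root face lies in $R$. In the quasi-quadrangulation case, such faces have degree $4$. In the D-patch case, faces of degree $2$ are incident to the root vertex. So if the root vertex is not in $\rm_C$, all inner faces of $\rm_C$ are quadrangles. This is exactly where the extra hypothesis is used.

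\textbf{Outer boundary.} The boundary of $R$ consists of edges and corners of vertices of $C$. This holds because $R$ is a connected component of the complement of $C$ together with its edges, so its frontier lies in $C$ and in edges joining vertices of $C$. All such edges go between labels $\ell$ and $\ell+1$, so walking around the outer face of $\rm_C$ the labels alternate between $\ell$ and $\ell+1$. Since $\rm_C$ is bipartite, the outer degree is even.

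\textbf{Obstacle: the outer contour uses only edges of $C$.} The delicate point is that every edge on the outer contour belongs to the map spanned by $C$, rather than being an edge leaving $C$ toward a vertex labeled $\ell-1$ or $\ell+2$. The approach is to show that a face of $\rm$ adjacent to $C$ along its outside belongs to $R$ whenever it contains a vertex outside $C$. Indeed, a quadrangle incident to $C$ containing a vertex $w\notin C$ has $w$ labeled $\ell-1$ or $\ell+2$, so that face is not cut off from $R$ by $C$. Conversely, a face whose four vertices all lie in $C$ cannot separate. More precisely, the edge between two boundary corners of $C$ on the $R$-side must itself join vertices of $C$, by maximality of $C$ and the labeling constraint $|\Delta\ell|=1$.

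To make this rigorous, I would argue on the face of $\rm_C$ that is the outer face. Its contour is a closed walk in the planar map induced on $C$, namely the face of the induced submap $\rm[C]$ containing $R$. Any face of $\rm[C]$ is bounded by a closed walk with labels in $\{\ell,\ell+1\}$.

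\textbf{Rooting and conclusion.} Finally, I would root $\rm_C$ at an outer corner labeled $\ell$, chosen canonically, for instance the first one met by some rule, so the statement "is a patch" holds up to this choice. After shifting labels by $-\ell$, the conditions of the definition of a patch are satisfied.
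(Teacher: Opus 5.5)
Your proof is correct and follows the paper's argument. Inner faces of the subpatch are non-root faces of $\rm$, hence quadrangles, with digons excluded in the D-patch case because they all touch the root vertex. The outer contour is a closed walk in the subgraph spanned by $C$, since $R$ is a face of that subgraph; the paper simply takes this as holding ``by construction''.

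Your ``obstacle'' paragraph is unnecessary given this frontier argument, and you should drop its heuristic rather than rely on it. A quadrangle incident to $C$ and containing a vertex $w\notin C$ can perfectly well be cut off from $R$. For example, take a vertex labeled $\ell+2$ whose incident faces have all their other vertices in $C$: those faces lie in a hole of $C$, hence inside the subpatch. Also, such a $w$ need not be labeled $\ell-1$ or $\ell+2$ when it is not adjacent to $C$.
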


\begin{proof}
    Let $\rm$ be a quasi-quadrangulation and $\rp$ a $\ell$-subpatch of $\rm$. By construction, $\rp$ has a boundary labeled $\ell-(\ell+1)$, and all its inner faces are well labeled quadrangles since they were inner faces of $\rm$. Thus $\rp$ is a $\ell$-shifted patch. 
    
    If $\rm$ is a $D$-patch, then the same holds unless $\rp$ includes a digon. Since all digons are adjacent to the root vertex of $\rm$, this can happen only if $\rp$ contains the root vertex.
\end{proof}

The idea of the decomposition is to take a well chosen subpatch -which will be the patch in the decomposition- and contract it into a vertex. This contraction may create digons, so the resulting map will be a D-patch.

First, consider a patch with co-root face labeled $(0,1,2,1)$ (Type II.3.a). Denote by $u_1$ ($\neq v_1$) the vertex of the co-root face labeled $1$ and $u_0$ the one labeled $2$. We delete the root edge $e$ and define a patch and D-patch as follows (see Figure~\ref{fig:caseII3}). Let $\rp$ be the subpatch $\Gamma_+(v_0)$, rooted at $(v_0,u_1)$. Since $v_0$ is labeled $0$, $\rp$ is a patch according to Lemma~\ref{lem:subpatch}. 

Next, define $\tilde \rd$ as the submap of $\rm$ obtained as the connected component of $u_0$ after deleting all the edges in $\rp$, and rooted at $(u_1,u_0)$. Note that $\tilde \rd$ is not empty as it contains at least the two edges $\{u_0,u_1\}$ and $\{u_0,v_1\}$, and that the vertices which belong both to $\tilde \rd$ and to $\rp$ are necessarily labeled $1$. Finally, let $\rd$ be the map obtained by contracting $\Gamma_+(v_0)$
into $u_1^*$, rooting the map at the edge $u_1^*u_0$ and shifting all the labels by $-1$ (so that in particular $u_1^*$ is labeled $0$). Equivalently, $\rd$ is the map obtained from $\tilde \rd$ by merging together the vertices labeled $1$ on its outer boundary.

\begin{figure}[ht]
  \centering 
  \includegraphics[width=\textwidth, page=7]{Figures/Bijection_Recursive.pdf}
  \caption{The decomposition of a type II.3.a map: subpatch extraction and contraction. Here $\ell(u_0) = 2$ so the patch (top) is $\Gamma_+(v_0)$.}
  \label{fig:caseII3}
\end{figure}

\begin{lemm}
\label{lem:patchDpatch}
	The maps $\rp$ and $\rd$ described above are respectively a patch and a D-patch. Furthermore, if $\rm$ and $\rp$ have respective outer degree $2k$ and $2i$ then $\rd$ has $j\coloneq i-k+1$ digons.
\end{lemm}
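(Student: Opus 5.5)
The plan is to establish the two claims about $\rp$ and $\rd$ separately: the patch claim comes from Lemma~\ref{lem:subpatch}, and the digon count from an Euler-characteristic computation. Since $v_0$ is labeled $0$ and $\rm$ is a patch (hence a quasi-quadrangulation), Lemma~\ref{lem:subpatch} applied with $\ell=0$ immediately shows that $\rp=\Gamma_+(v_0)$ is a patch. Its root $(v_0,u_1)$ is admissible because $u_1$ is labeled $1$. Moreover, $u_1$ lies in the $\{0,1\}$-component $C$ of $v_0$ once $e$ is deleted, so $(v_0,u_1)$ is a boundary edge of $\rp$.

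\textbf{Describing the region of $\rd$.} The whole boundary of the patch $\rm$ is alternately labeled $0,1$ and is connected to $v_0$, so it is contained in $C$. Consequently, after deleting $e$, the only part of $\rm$ not inside $\rp$ is the region opened up by removing $e$. This region is bounded by $C$ and contains $u_0$, which is labeled $2$; it is exactly where $\tilde\rd$ lives.

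\textbf{Labels in $\tilde\rd$.} I would then check that every vertex of $\tilde\rd$ outside $C$ has label at least $2$. The argument is a discrete intermediate-value one. Labels change by $\pm1$ along edges. Any vertex labeled $0$ or $1$ that is adjacent to $C$ belongs to $C$ by maximality. Hence no path in the region starting from $u_0$ can reach a label $\le 0$ without first meeting $C$. This gives the following facts about $\rd$, after contracting $C\cap\tilde\rd$ (whose vertices there are labeled $1$) to $u_1^*$ and shifting labels by $-1$:
\begin{itemize}
\item $\rd$ is properly labeled.
\item Every neighbour of the root vertex $u_1^*$ is labeled $1$.
\item Every inner face of $\tilde\rd$ is a quadrangle of $\rm$, and it becomes either a quadrangle or, when two of its corners lie on $C$, a digon incident to $u_1^*$.
\item The root face of $\rm\setminus e$, read as $0,1,\dots,0,1,2,1$ with all its $0,1$ corners in $C$, collapses to the digon $(u_1^*,u_0)$.
\end{itemize}
So $\rd$ is a D-patch. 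I expect this step, making the planar and label argument rigorous (in particular that no face of the region carries a label $-1$, and that contraction does not disconnect anything), to be the main obstacle. I would handle it via the maximality of $C$ together with the fact that $\rm$ is a patch, whose labels around $C$ are constrained to lie in $\{0,1,2\}$ on the outside.

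\textbf{Counting digons.} Since every face of $\rd$ has degree $2$ or $4$, the number $j$ of digons satisfies $2j+4(F_\rd-j)=2E_\rd$, that is $j=2F_\rd-E_\rd$. We have $E_\rd=E_\rm-1-E_\rp$ and $V_\rd=V_\rm-V_\rp+1$. Euler's formula for $\rd$ gives
\[F_\rd = E_\rd - V_\rd + 2 = (E_\rm-V_\rm)-(E_\rp-V_\rp) = F_\rm-F_\rp.\]
Counting degrees in the quasi-quadrangulations $\rm$ and $\rp$ gives $E_\rm=2F_\rm-2+k$ and $E_\rp=2F_\rp-2+i$. Substituting yields
\[j=2F_\rm-2F_\rp-(2F_\rm-2+k)+1+(2F_\rp-2+i)=i-k+1,\]
as claimed.
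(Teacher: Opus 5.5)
\textbf{There is a genuine gap in the proof that $\mathfrak d$ is a D-patch.} You claim that every vertex of $\tilde{\mathfrak d}$ outside $C$ has label at least $2$, and in your closing remark that no face of the region carries a label $-1$. Both claims are false. Your intermediate-value argument only shows that a path from $u_0$ reaching a label $\le 0$ passes through \emph{some} $\{0,1\}$-component, not necessarily through $C$. Labels need not be monotone, and a cycle labeled $2,3,2,3$ outside $\mathfrak p$ can enclose a different $\{0,1\}$-component.

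Here is a concrete counterexample. Take $k=1$, with root face the digon formed by $e$ and a second edge $e'$ from $v_0$ to $v_1$. Add vertices $u_1,z$ with label $1$, $u_0,x$ with label $2$, and $t$ with label $3$. Let the inner faces be $(v_0,v_1,u_0,u_1)$, $(v_0,u_1,x,v_1)$, $(u_0,v_1,x,t)$, $(u_1,u_0,z,x)$ and $(u_0,t,x,z)$. This is a type II.3.a patch with $C=\{v_0,v_1,u_1\}$, yet $z\in\tilde{\mathfrak d}$ has label $1$. Further subdividing the faces around $z$ (inserting vertices joined to two opposite corners) produces labels $0$ and $-1$ inside $\tilde{\mathfrak d}$. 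The lemma itself still holds here, with $i=j=2$.

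Consequently, your second and third bullets are not established. Neither is the fact that every face of $\mathfrak d$ has degree $2$ or $4$, which your Euler count relies on. The third bullet is also misstated. Inner faces of $\tilde{\mathfrak d}$ stay quadrangles after the merge; for instance, a $(2,1,2,1)$-face with both $1$'s on $C$ becomes a quadrangle in which $u_1^*$ appears twice. The digons come instead from pinching the outer face of $\tilde{\mathfrak d}$ at its $1$-corners.

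\textbf{What you actually need is a local statement, and it is true.} Every face of $\mathfrak m\setminus e$ lying outside $\mathfrak p$ and sharing an edge with $C$ is a $(0,1,2,1)$-face whose $0$-corner lies in $C$. Hence $\partial\tilde{\mathfrak d}$ alternates between $1$-vertices of $C$ and vertices labeled $2$.

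Maximality of $C$ only excludes $(0,1,0,1)$-faces. Excluding $(1,0,-1,0)$-faces cannot follow from maximality and the patch property alone, which is the fix you propose. Indeed, in the symmetric case II.3.b those hypotheses are unchanged, yet all outside neighbours of $C$ are labeled $-1$. The input $\ell(u_0)=2$ must be transported along $\partial\mathfrak p$ by a planar argument:
\begin{itemize}
\item Walk along the contour of $\mathfrak p$ from $v_1$ to $u_1$.
\item Two consecutive such faces meet at a vertex $b$ of $C$ that has an edge leaving $\mathfrak p$. That edge goes to a $2$ if $\ell(b)=1$ and to a $-1$ if $\ell(b)=0$, so $\ell(b)$ forces both faces to have the same type.
\item The first face, next to the edge $v_1u_0$, is of type $(0,1,2,1)$.
\end{itemize}

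With this in place, you also need to check that $\tilde{\mathfrak d}$ is connected, which gives $E_{\mathfrak d}=E_{\mathfrak m}-1-E_{\mathfrak p}$ and $V_{\mathfrak d}=V_{\mathfrak m}-V_{\mathfrak p}+1$. Your Euler-characteristic count $j=2F_{\mathfrak d}-E_{\mathfrak d}=i-k+1$ is then correct. It is a genuinely different and somewhat cleaner route than the paper's. The paper instead matches each $0$-corner of $\partial\mathfrak p$ that does not come from $\partial\mathfrak m$ with a digon created by the contraction.
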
 
\begin{proof}
	As observed in Lemma~\ref{lem:subpatch}, $\rp$ is a patch. 
	
    For $\rd$, it follows from the construction of $\rp$ that any edge incident to the outer face of $\tilde \rd$ was incident to a face labeled $(0,1,2,1)$ in $\rm$ (otherwise it would have been included in $\rp$). Hence, the vertices on the boundary of $\tilde \rd$ are labeled $1$ and $2$ alternately. Therefore, the new faces created in the contraction step are digons, and those are necessarily adjacent to the contracted vertex $u_1^*$, which is also the root vertex. Moreover, 
	by construction, any neighbour of $u_1^*$ had label $2$ before the shift, as otherwise it would have belonged to $\Gamma_+(v_0)$, and so they all have label $1$ in $\rd$. 
	
	Now consider a corner $c$ labeled $0$ incident to the outer face of $\rp$, there are $i$ of those. Either it was on the boundary on $\rm$, or it was in a face labeled $(0,1,2,1)$. In the second case, the two $1$ vertices were merged in $\rd$, creating a digon. Observe that all digons of $\rd$ except for its root face were created in that way. Finally, note that all corners of the root face of $\rm$ become corners on the root face of $\rp$ by definition of $\Gamma_+(v_0)$ (and since the vertices incident to the outer face of $\rm$ are labeled $0$ and $1$ alternately). Writing $j$ for the number of digons of $\rd$, this gives $i=k+j-1$, or equivalently $j=i-k+1$ as announced. 
\end{proof}

If $\rm$ is a patch with co-root face labeled $(0,-1,0,1)$, we do the same construction. The difference is that this time it is $v_0$ and not $u_1$ who belongs to both $\rp$ and $\tilde\rd$ and becomes the contracted vertex, and also the vertices on the boundary of $\tilde\rd$ are labeled $0$ and $-1$ alternately. So the root vertex of $\rd$ is labeled $0$ and all its neighbours are labeled $-1$, and to obtain a D-patch we shift all labels in $\rd$ except for the one of the root vertex by $+2$.

To get the tree encoding $\psi(\rm)$, we want to attach the two trees $\psi(\rp)$ and $\psi(\rd)$ to a binary vertex. All that remains is to choose which one is the left subtree and which one is the right subtree (i.e. in which case the left subtree has positive charge).
We choose to pick $\psi(\rp)$ as the right subtree when $u_0$ is labeled $2$, and as the left subtree when it is labeled $0$ to have the map with smaller labels on the right (another reason for this choice will appear in Appendix~\ref{app:explo}). Observe that by the second statement of Lemma~\ref{lem:contraction}, if $c_{\psi(\rp)}$ is half the length of the boundary of $\rp$ and $-c_{\psi(\rd)}$ is the number of digons of $\rd$, then $c_{\psi(\rm)} = i-j+1$ is half the length of the boundary of $\rm$, as desired.

\medskip

We now turn our attention to Type II.4. Consider a D-patch $\rm$ with a co-root face of degree $4$ adjacent only once to the root vertex. Delete the root edge and let $\rp$ be $\Gamma_-(u_0)$ if $\ell(u_0) = 0$ and $\Gamma_+(u_0)$ if $\ell(u_0) = 2$ and let $\rd$ be the map obtained by contracting $\rp$ and then merging $u_0$ with $v_0$ (see Figure~\ref{fig:caseII4}). 

Delete the root edge. Then, if $\ell(u_0) = 0$ let $\rp$ be $\Gamma_-(u_0)$ with all the labels shifted by $+1$, and rooted at the first edge which belongs to $\rp$ when turning clockwise around $u_0$ starting from $\{u_0,u_1\}$, see Figure~\ref{fig:caseII4}. Similarly, if $\ell(u_0) = 2$, let $\rp$ be $\Gamma_+(u_0)$ with all the labels shifted by $-2$, and rooted at the last edge which belongs to $\rp$ when turning clockwise around $u_0$ starting from $\{u_0,u_1\}$. In both cases, we let $\rd$ be the map obtained by contracting $\rp$ and then merging $u_0$ with $v_0$, and calling $v_0^*$ the resulting vertex.

\begin{figure}[ht]
  \centering 
  \includegraphics[width=\textwidth, page=8]{Figures/Bijection_Recursive.pdf}
  \caption{The decomposition of a map of type II.4.a: subpatch extraction and contraction. Here, we have $\ell(u_0)=0$, so the patch $\rp$ (top) is equal to $\Gamma_-(u_0)$ up to a shift of the labels.}
  \label{fig:caseII4}
\end{figure}

\begin{lemm}
\label{lem:contraction}
	The maps $\rp$ and $\rd$ described above are respectively a patch and a D-patch. Furthermore, if $\rm$ has $|k|$ digons and $\rp$ has a boundary of length $2i$ then $\rd$ has $j=i-k+1$ digons. 
\end{lemm}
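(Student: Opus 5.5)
The plan is to follow the proof of Lemma~\ref{lem:patchDpatch} closely, treating the case $\ell(u_0)=0$ in detail; the case $\ell(u_0)=2$ is symmetric (exchange $\Gamma_-$ and $\Gamma_+$, and the labels $-1$ and $2$). Throughout, $\rm$ is a D-patch with $|k|$ digons, $k<0$, and its co-root face $(v_0,v_1,u_0,u_1)$ meets $v_0$ only once.

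\textbf{Step 1: $\rp$ is a patch.} By Lemma~\ref{lem:subpatch}, it suffices to show that $\Gamma_-(u_0)$ does not contain the root vertex $v_0$.
\begin{itemize}
\item All neighbours of $v_0$ are labeled $1$. So $v_0$ is an isolated vertex in the subgraph induced by the labels $\{-1,0\}$. Hence $v_0$ is not in the component $C$ of $u_0$, because $u_0\neq v_0$ by the type II.4 assumption.
\item $v_0$ is not enclosed by $C$ either. It lies on the root face of $\rm$, and the subpatch is obtained by discarding the face-component containing the root face.
\end{itemize}
Consequently no digon of $\rm$ lies in $\Gamma_-(u_0)$. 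After the shift by $+1$ it is a patch; the prescribed root edge has endpoints labeled $0,1$ after the shift, as required.

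\textbf{Step 2: $\rd$ is a D-patch.} Let $\tilde\rd$ be the part of $\rm\setminus\{e\}$ outside $\rp$, glued along $\partial\rp$.
\begin{itemize}
\item By maximality of $C$, every face of $\rm$ sharing an edge with $\partial\rp$ but not inside $\rp$ has labels $(-1,0,1,0)$. Its $1$-vertex lies outside $\rp$, and its two $0$-vertices lie on $\partial\rp$.
\item Contract $\rp$ to a single vertex and merge it with $v_0$ into $v_0^*$. Each such face becomes a digon incident to $v_0^*$.
\item The remaining faces are either untouched quadrangles, or the old digons of $\rm$, which stay incident to $v_0^*$ because $v_0$ is merged.
\item Every neighbour of $v_0^*$ is labeled $1$. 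Indeed, such a neighbour is either a neighbour of $v_0$ in $\rm$, or a vertex outside $\rp$ adjacent to a $0$-vertex of $\partial\rp$; the latter must be labeled $1$, since a label $-1$ would put it in $C$.
\item I will also check that the root face of $\rd$ is a digon. The natural root edge is the image of $\{u_0,u_1\}$ or $\{v_0,v_1\}$, matching the figure.
\end{itemize}

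\textbf{Step 3: counting digons (the main obstacle).} I expect this to be where the care is needed. I would mimic the corner-counting argument of Lemma~\ref{lem:patchDpatch}. The boundary of $\rp$ has $i$ corners carrying the smaller label. Each such corner either lies in a face $(-1,0,1,0)$ as above, producing one new digon, or lies in the big face of $\rm\setminus\{e\}$ formed by merging the root digon $(v_0,v_1)$ with the co-root quadrangle.

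The delicate part is that last face. After contracting $\rp$ and identifying $u_0$ with $v_0$, its boundary walk $v_0,v_1,u_0,\dots,u_1,v_0$ splits into digons. I need to count them exactly, including the new root face, and make sure the corner of $\rp$ at $u_0$ is not double counted. The old digons contribute $|k|-1$, since the root digon of $\rm$ is destroyed by deleting $e$.

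The target is $j=i-k+1=i+|k|+1$ digons. So after the $|k|-1$ old digons and the $i$ corners of $\rp$, the merged root face must contribute exactly two further digons: one on the $v_1$ side and one on the $u_1$ side of the identification $u_0\sim v_0$. I would verify this by drawing the local picture, in the spirit of Figure~\ref{fig:caseII4}, and tracking each corner of the merged face.

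Finally, the $\ell(u_0)=2$ case uses $\Gamma_+(u_0)$ with the shift by $-2$ and the other extremal edge as root. The same argument applies with the roles of the $1$- and $2$-labels exchanged.
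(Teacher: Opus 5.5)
Your proposal is correct and follows the paper's argument: $\rp$ avoids $v_0$, so Lemma~\ref{lem:subpatch} applies. Contraction turns exactly the faces along $\partial\rp$ into digons at $v_0^*$, all of whose neighbours are labeled $1$. Your tally $(|k|-1)+i+2$ equals the paper's $|k|+i+1$; the paper simply regards the new root digon $(v_0^*,v_1)$ as replacing the old one and counts only $(v_0^*,u_1)$ as created by the merge. The double-counting worry in your Step~3 is vacuous: the merged face $(v_0,v_1,u_0,u_1)$ meets $\rp$ only at the corner at $u_0$, whose label ($0$, resp.\ $2$) differs from that of the $i$ digon-producing corners ($-1$, resp.\ $3$ --- so for $\ell(u_0)=2$ these carry the larger label, not the smaller one). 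Hence all $i$ of them lie in faces outside $\rp$, and the split of the merged face contributes exactly the two extra digons you identify.
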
 
\begin{proof}
    Initially, all digons of $\rm$ are incident to $v_0$ by definition. Then, as in the proof of Lemma~\ref{lem:patchDpatch}, the contraction of $\rp$ creates only digons, that are incident to $u_0$. Moreover, after contraction $\rp$, both $u_0$ and $v_0$ have only neighbours labeled $1$ ($v_0$ by hypothesis on $\rm$ and $u_0$ by construction), so that the resulting map is a D-patch.

    Except for the digon created when merging $v_0$ and $u_0$, all digons of $\rd$ either were already in $\rm$ or were created by the contraction of $\rp$, and exactly $i$ were created this way. This gives that the number of digons in $\rd$ is equal to $-k+i+1$, as claimed. Note that there is a slight subtlety for the root face; the digon $(v_0,v_1)$ of $\rm$ is replaced by the digon $(v_0^*,v_1)$, which has no influence on the total number of digons. 
\end{proof}

It remains to decide how to encode this decomposition by a well-charged tree. As before, the tree encoding in obtained by attaching $\psi(\rp)$ and $\psi(\rd)$ to a binary vertex. Following the same logic as before, we choose to have $\psi(\rp)$ to be the right subtree when $\ell(u_0)=0$ and the left one when $\ell(u_0)=2$. Again, Lemma~\ref{lem:contraction} ensures that the charge behave as wanted. 

The following section is devoted to the description of the reverse operation, called the subpatch expansion.

\subsection{Types II.3 and II.4 - Subpatch expansion}

Observe that for both types we had $i+1-j = k \neq 0$, so we should take $j \neq i+1$. Fix $i,j>0$ with $j\neq i+1$, and let $\rp$ be a patch with a boundary of length $2i$ and $\rd$ be a D-patch with $j$ digons. Informally, the \emph{subpatch expansion} consists in opening the digons of $\rd$ by creating multiple copies of its root vertex and gluing them to $\rp$, see Figure~\ref{fig:caseIIrev}.

More precisely, we build the new map $\rm$ as follows:
\begin{enumerate}[1)]
	\item If $\rp$ corresponds to the left subtree of $\rt_\rm$, shift all its labels by $+2$ and denote by $c_1$ the root corner (which now has label $2$, otherwise shift the labels by $-1$ and denote by $c_1$ the corner of the root face which immediately follows the root corner (it has label $0$). List the following even labeled corners of the root face (in counterclockwise order) as $c_2, \ldots, c_{i+1}$ , with $c_{i+1}=c_1$.
	\item Open the inner digons of $\rd$ by creating a new copy of its root vertex for each inner digon. Denote by $w_1$ the one incident to the root edge and $w_2, \ldots w_j$ the others in \emph{clockwise} order.
	\item Then, for $1\leqslant s \leqslant \min(i+1,j)$, merge $w_s$ with the vertex incident to $c_s$ (in such a way that the edges incident to $w_s$ are inserted in $c_s$). In the case where both vertices do not have the same label, keep the label of $c_s$. 
	\item[4.a)] If $j>i+1$, some $w_s$ remain unmatched. Merge $w_{i+1}, \ldots, w_j$ together into a vertex $w^*$ and add a root edge from $w^*$ to the second $1$ on the boundary of the map so as to create an inner quadrangle.
	\item[4.b)] If $i\geqslant j$, some corners of $\rp$ remain unmatched. To finish the construction, we add a new root edge that encloses the co-root corner of $\rd$ into a quadrangle. To respect our rooting convention, if $\ell(c_{i+1})=0$, this edge is from the corner preceding $c_{i+1}$ to $c_j$ (and then is labeled $(-1,0)$). Otherwise, i.e. if $\ell(c_{i+1})=2$, the root edge is added from $c_{i+1}$ to the corner following $c_j$ (and in this case is labeled $(2,3)$).
	\item[4.c)] We forbid the case $j=i+1$ as a subpatch extraction cannot yield a pair of maps with these values. 
\end{enumerate}

\begin{figure}[ht]
  \centering 
  \includegraphics[width=\textwidth, page=10]{Figures/Bijection_Recursive.pdf}\\
  \vspace{0.2cm}
  \hrulefill
  \vspace{0.3cm}
  \includegraphics[width=\textwidth, page=9]{Figures/Bijection_Recursive.pdf}
  \caption{The reverse operation, subpatch expansion, for types II.4 (top) and II.3 (bottom).}
  \label{fig:caseIIrev}
\end{figure}

\begin{lemm}
\label{lem:expansion}
    The operation of subpatch expansion for a patch $\rp$ with boundary of length $2i$ and a $D$-patch $\rd$ with $j$ digons with $j \neq i+1$ gives a map $\rm$ that is :
    \begin{enumerate}[a)]
        \item A patch with boundary of length $2(i-j+1)$ if $i\geqslant j$.
        \item A $D$-patch with $j-i-1$ digons if $i<j-1$.
    \end{enumerate}
\end{lemm}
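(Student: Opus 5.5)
We check directly, step by step, that the construction produces a labeled map with the right face structure and boundary. The verification splits into three parts: (i) the map obtained after steps 1)--3) is a well-defined labeled planar map whose inner faces are well-labeled quadrangles or digons at the right place; (ii) the boundary after step 4.b) (case $i\geqslant j$); (iii) the digon count after step 4.a) (case $j>i+1$).

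\textbf{Steps 1)--3).} Opening the $j-1$ inner digons of $\rd$ turns $\rd$ into a planar map $\tilde\rd$. Its outer face is bounded by the copies $w_1,\dots,w_j$ alternating with the odd-labeled vertices that were the other endpoints of the digons; there are no other vertices on it, since every neighbour of the root of $\rd$ has label $1$. So the outer face of $\tilde\rd$ has degree $2j$. Planarity is preserved because the $w_s$ appear in clockwise order around $\tilde\rd$, while the $c_s$ appear in counterclockwise order around the outer face of $\rp$. Gluing $\tilde\rd$ into the outer face of $\rp$ by identifying $w_s$ with $c_s$ for $s\leqslant\min(i+1,j)$ is then a planar gluing of two disks along consecutive boundary points.

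Each identification closes a region bounded by two boundary edges of $\rp$ and two edges of $\tilde\rd$. Label check: after the shift, $c_s$ has label $0$ or $2$. The neighbours of $w_s$ are labeled $1$ in $\rd$; in the case of label $2$ we shift $\rd$ accordingly, as in the extraction, by $+1$. Hence the region is a face $(0,1,2,1)$ or $(0,-1,0,1)$, and all edges join labels differing by one. Inner faces of $\rp$ and the quadrangles of $\rd$ are unaffected, and no digon survives, since each inner digon was opened.

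\textbf{Case $i\geqslant j$.} After identifying $c_1,\dots,c_j$, the outer face consists of the part of the boundary of $\rp$ from $c_j$ to $c_{i+1}=c_1$, of length $2(i+1-j)$, together with the two boundary edges of $\tilde\rd$ between $w_j$ and $w_1$ around the co-root corner. The new root edge of step 4.b) closes a quadrangle containing that co-root corner. Counting gives an outer degree of $2(i+1-j)+2-2=2(i-j+1)$. We then check, using the case distinction on $\ell(c_{i+1})$, that the boundary labels alternate between two consecutive values and that the root edge goes from the lower to the higher one. After the global relabeling convention this is $0,1$, so $\rm$ is a patch. When $i+1-j=1$ this degenerates to a boundary of length $2$ surrounding the new edge.

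\textbf{Case $j>i+1$.} All corners $c_1,\dots,c_{i+1}$ are used, so $\rp$ becomes enclosed. The outer face of $\rp$ is entirely surrounded by the newly created quadrangles. Merging $w_{i+1},\dots,w_j$ into $w^*$ recreates $j-i-1$ digons incident to $w^*$, as the $w$'s were separated only by digon edges. The new root edge closes a quadrangle and leaves a digon root face incident to $w^*$, so the digon count is $j-(i+1)$, including the root face. It remains to check that $w^*$ is the root vertex with all neighbours labeled $1$ after normalization, and that digons are incident only to $w^*$; both follow from the structure of $\tilde\rd$.

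\textbf{Main obstacle.} The main obstacle is the bookkeeping at the two ends of the gluing, namely $w_1/c_1$ and $w_j/c_j$. This is where the rooting conventions and the label shift ($+2$ versus $-1$) must be reconciled. Degenerate cases such as $j=1$ or $i=j$ also need care, and that is where I expect case checking with figures.
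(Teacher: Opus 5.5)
Your route is the paper's: glue the opened D-patch $\tilde\rd$ along the boundary of the shifted $\rp$, check that each gap between $c_s$ and $c_{s+1}$ becomes a quadrangle, then treat $i\geqslant j$ and $j>i+1$ separately. Your case $i\geqslant j$ is fine, but two steps fail.

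\textbf{The label check for the $+2$ shift.} In that case $\rd$ is \emph{not} shifted. If you shifted it by $+1$, every neighbour of its root $w$ would get label $2$. The merged vertex keeps the label $2$ of $c_s$, so it would be joined to vertices with the same label. The construction instead gives only the glued copies $w_s$ the label $2$ of $c_s$ and leaves all other labels of $\rd$ unchanged. This is valid, and the inner quadrangles of $\rd$ at $w$ stay well labeled, precisely because all neighbours of $w$ are labeled $1$. That is exactly where the D-patch condition is needed. The new faces are then $(2,3,2,1)$, resp.\ $(0,-1,0,1)$ for the $-1$ shift.

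\textbf{The case $j>i+1$.} Your count does not close. By your own count, merging $w_{i+1},\dots,w_j$ recreates $j-i-1$ digons, and adding a digon root face then gives $j-i$, not $j-i-1$. Moreover, the root edge can only ``close a quadrangle'' if the outer face is a quadrangle just before it is added.

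The source of the trouble is that $w_{i+1}$, which step 4.a) as written puts into $w^*$, has already been glued at $c_{i+1}=c_1$. Putting it into $w^*$ identifies the root vertex with a vertex of $\rp$, which has neighbours labeled $-1$ (or $3$ for the $+2$ shift). So the neighbour condition you postpone would actually fail, and the outer face would already be a digon.

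The vertex $w^*$ must be the merge of the copies \emph{not} glued to $\rp$, namely $w_{i+2},\dots,w_j$. The count then goes as in the paper:
\begin{itemize}
\item Before merging, the outer face consists of the $j-i$ remaining opened digons, with even corners $c_1,w_{i+2},\dots,w_j$.
\item Merging these $j-i-1$ copies leaves an outer quadrangle $(c_1,1,w^*,1)$ with $c_1\neq w^*$, plus $j-i-2$ inner digons at $w^*$.
\item The root edge adds one quadrangle, incident once to $w^*$ as type II.4 requires, and the digon root face. This gives $j-i-1$ digons in total.
\end{itemize}
The neighbour condition is then immediate: the neighbours of $w^*$ are a subset of those of $w$ in $\rd$, hence all labeled $1$. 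This is the paper's argument, and it needs $w^*$ to be disjoint from $\rp$.
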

\begin{proof}
    Without loss of generality, we assume that we start the expansion by shifting the labels of $\rp$ by $-1$, the other case being symmetric.

    Denote by $w$ the root vertex of $\rd$. First, observe that when following the boundary of the ``opened'' D-patch $\tilde\rd$, two consecutive copies $w_s$ and $w_{s+1}$ of $w$ are separated by exactly one corner labeled $1$. Thus, since $w_s$ and $w_{s+1}$ are merged with vertices incident to two consecutive \emph{even} labeled corners of the root face of $\rp$, all the faces created have degree $4$. The resulting labeling is still valid since the merged vertices have the same label. (Note, that in the other case ($+2$ shift), the vertices incident to the corners are labeled $2$, but since all the neighbours of the $w_s$ are labeled $1$ the labeling remains valid.)
    
    If $j > i+1$, then before merging together the unmatched $w_s$ we have a quasi-\!\! quadrangulation whose boundary is made of the remaining opened digons of $\rd$, and so has labels alternating $0$ and $1$. Merging all the remaining copies reduces this boundary to a quadrangle, and might create some digons, which are all incident to $w^*$. Finally, the root edge added reduces the root face to a digon, while creating exactly one quadrangle. Vertices incident to $w^*$ form a subset of the vertices which were incident to $w$ in $\rd$, and hence they are all labeled $1$. This concludes the proof that $\rm$ is a $D$-patch in this case.
    
    If $i+1 > j$, then before adding the new edge we have a quasi-quadrangulation whose boundary is made of a part of the boundary of $\rp$ (with labels being alternately equal to $0$ and $-1$) and of the root digon of $\rd$, which has a corner $c$ labeled $1$. The edge added encloses $c$ in a quadrangle, leaving a boundary with only $0$ and $-1$, so that the resulting map is a $(-1)$-shifted patch.
    
    Finally, observe that merging a $w_s$ with a corner erases a digon of $\rd$ and reduces the length of the boundary of $\rp$ by $2$ (except for $w_1$). This means that the remaining map has ``charge" $i-j+1$, i.e. boundary of length $2(i-j+1)$ if it is a patch or $j-i-1$ digons if it is a $D$-patch.  
\end{proof}

\begin{theo}
    The subpatch contraction and subpatch expansion operations described above are inverse of each other. 
\end{theo}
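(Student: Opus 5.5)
We prove the two compositions separately, treating Types II.3 and II.4 in parallel and, by the symmetry used in Lemma~\ref{lem:expansion}, only the case where the labels of $\rp$ are shifted by $-1$. Throughout we keep track of the rooting conventions, since these are what make the inverse unique.

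\textbf{Contraction followed by expansion.} Start from $\rm$ of type II.3 or II.4 and let $(\rp,\rd)$ be the output of the contraction.

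1. The boundary of the opened map $\tilde\rd$ alternates between copies of the root vertex $w$ of $\rd$ and corners labeled $1$ (after the shift).

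2. In $\rm$, each copy $w_s$ corresponds to an even corner $c_s$ of the outer face of $\rp$ at which $\tilde\rd$ was attached. These are precisely the corners lying in faces of the form $(0,1,2,1)$ (or the symmetric pattern) that were used in the proof of Lemma~\ref{lem:patchDpatch} to count digons.

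3. The main point is to show that the clockwise order of the $w_s$ around $w$ matches the counterclockwise order of $c_1,\dots,c_{i+1}$ along $\partial\rp$, with $w_1$ (incident to the root edge of $\rd$) matched to $c_1$. This holds because $\tilde\rd$ lies outside $\rp$. Walking around the outer face of $\rp$ counterclockwise is the same as walking around $u_1^*$ (resp.\ $v_0^*$) clockwise once $\rp$ is contracted to a point. The specific rooting choices of $\rp$ and $\rd$ (the root at $(v_0,u_1)$, the root at $u_1^*u_0$, and the first/last edge when turning around $u_0$) are exactly what makes $c_1$ and $w_1$ correspond.

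4. The expansion merges the same pairs back together. The unmatched part is then handled as follows.
\begin{itemize}
\item If $i\ge j$, the remaining corners $c_{j+1},\dots,c_{i+1}$ are the corners of $\partial\rp$ that were on the root face of $\rm$. Adding the edge in step 4.b restores the deleted root edge $e$.
\item If $j>i+1$ (type II.4), the merged vertex $w^*$ is $v_0$. The copies $w_{i+1},\dots,w_j$ correspond to digons of $\rm$ that were incident to $v_0$ and not created by the contraction, together with the digon coming from the merge of $u_0$ and $v_0$. Step 4.a then undoes that merge and restores $e$.
\end{itemize}

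\textbf{Expansion followed by contraction.} Start from $(\rp,\rd)$ with $j\ne i+1$ and let $\rm$ be the output of the expansion.

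1. By Lemma~\ref{lem:expansion}, $\rm$ is a patch or a D-patch.

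2. Its co-root face is the quadrangle closed by the new root edge, with labels $(0,-1,0,1)$, resp.\ $(0,1,0,1)$ with second $0\neq v_0$. So $\rm$ has type II.3 or II.4, with the same left/right assignment as the one that produced $\rp$.

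3. The key claim is that the relevant subpatch of $\rm$ ($\Gamma_-$ or $\Gamma_+$ of the correct vertex) is exactly the shifted copy of $\rp$.
\begin{itemize}
\item Every vertex of $\rp$ has label in $\{-1,0\}$, and $\rp$ is connected, so $\rp$ is contained in the corresponding component $C$ of the subpatch.
\item Conversely, every vertex of $\tilde\rd$ other than the copies $w_s$ has label at least $1$, since the neighbours of $w$ are labeled $1$. So $C$ gains no vertices from $\tilde\rd$.
\item Every inner face of $\rm$ enclosed by $C$ is an inner face of $\rp$, because $\tilde\rd$ is glued outside $\partial\rp$.
\end{itemize}

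4. Contracting this subpatch merges the $w_s$ back into $w$, which recovers $\rd$. The rooting conventions again check out, by the same ordering argument as in step 3 of the first direction.

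\textbf{Main obstacle.} The hardest part is the maximality in step 3 of the second direction, namely showing that the expansion does not make the subpatch larger. Two things must be verified.
\begin{itemize}
\item The merged vertex $w^*$ of step 4.a, and the extra corner enclosed in step 4.b, must not create a new $\{-1,0\}$-connection. The plan is to check that these only add edges toward labels $1$ or toward already-present vertices.
\item In the $+2$-shift case, vertices labeled $2$ in $\rp$ must not become connected to label-$1$ neighbours of $w$ in a way that enlarges $\Gamma_+$. The plan is to use the fact that all neighbours of $w$ are labeled $1$ and lie outside $\partial\rp$, and to separate the two regions topologically along $\partial\rp$.
\end{itemize}
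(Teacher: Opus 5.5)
Your route is the paper's own: types come from Lemmas~\ref{lem:contraction} and~\ref{lem:expansion}, contracting the expanded map recovers $\rp$, and the expansion redoes the original gluing because the rooting conventions force it. You give more detail, although the ordering/rooting claim in your step~3 is, as in the paper, only asserted. However, two steps are wrong as written.

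\textbf{The maximality step.} In the second direction you justify ``$C$ gains no vertices from $\tilde\rd$'' by claiming that every vertex of $\tilde\rd$ other than the $w_s$ has label at least $1$. This is false. Only the neighbours of the root of a D-patch are forced to be labeled $1$; deeper vertices can have any label. For example, the vertex $u_0\neq v_0$ of a type II.4 D-patch is labeled $0$, and vertices labeled $-1$ can sit behind it.

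The conclusion is still true, but the argument must go through adjacency. The glued vertices are the only vertices shared by $\rp$ and $\tilde\rd$. Their neighbours in $\tilde\rd$ are all labeled $1$, which lies outside $\{-1,0\}$, so the component of $\rp$ cannot enter $\tilde\rd$. In the $+2$ case the label $1$ also lies outside $\{2,3\}$, which makes your second ``obstacle'' immediate. Low-labeled vertices deep inside $\tilde\rd$ form other components lying in the outer region of $\rp$.

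You also need to say that the root edge is deleted before extracting the subpatch. In case 4.b the new root edge joins two vertices of $\rp$. If it is kept, the root face is bounded entirely by $C$, and the extracted ``subpatch'' would contain all of $\tilde\rd$.

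\textbf{Off-by-one when $j>i+1$.} Step 3 of the expansion already merges $w_{i+1}$ into $c_{i+1}=c_1$, which is the corner of $\rp$ at $u_0$. So merging $w_{i+1},\dots,w_j$ into $w^*$, as you assert (following the literal wording of step 4.a), would identify $v_0$ with $u_0$ instead of undoing their merge.

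The correct count is as follows. Around $v_0^*$ in $\rd$, the contracted patch contributes the $i+1$ copies $w_1,\dots,w_{i+1}$, since its $i$ contraction digons lie between the root digon and the merge digon. Both $w_1$ and $w_{i+1}$ return to $c_1$, on either side of where $v_0$ was attached. The vertex $v_0$ contributes exactly $|k|=j-i-1$ copies, namely $w_{i+2},\dots,w_j$. Only these should be merged into $w^*$, and this is also what the proof of Lemma~\ref{lem:expansion} needs in order to leave a quadrangular outer face.

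Your matching of $w_{i+1},\dots,w_j$ with ``the digons at $v_0$ plus the merge digon'' is exactly this miscount. For the same reason, your claim that each $w_s$ corresponds to a distinct even corner of $\partial\rp$ fails in type II.4, since $c_1$ receives two copies.
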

\begin{proof}
    Lemmas~\ref{lem:contraction} and \ref{lem:expansion} guarantee that the maps obtained through both operations are of the expected type. It is clear that to reverse an expansion, one should contract $\rp$. Therefore, it suffices to observe that there is no other choice for gluing the maps together in such a way that all inner faces are quadrangles and which respects the rooting convention.
    
    We refer to \cite{MBM_AEP_20} for a more detailed proof in the case where the map obtained after the expansion is a D-patch. Their construction is slightly different: instead of shifting the labels of $\rp$, they apply the transformation $\ell \to 1-\ell$ but the arguments still hold. The same arguments apply similarly to the other cases.
\end{proof}

\section{Discussions}
\label{sec:discuss}

\subsection{Equation for patches}
\label{sub:equation}

In this section, we use our bijection to obtain a functional equation for the generating series of patches and D-patches. 

Let $P(t, x)$ be the generating function of patches counted by their number of edges (variable $t$) and the half length of their boundary (variable $x$) and let $D(t, x)$ be the generating function of D-patches counted by their number of edges (variable $t$) and their number of digons (variable $x$). 
We define $M(t, x) \coloneqq P(t, x) + D(t, 1/x)$, so that $M(t,x)$ is the generating function of both patches and D-patches. Informally, it amounts to considering D-patches as patches with a negative boundary length, by analogy with trees of negative charge. Then $M$ satisfies the following equation:
\begin{equation}
\label{eq:patches}
    M(t, x) = 1 + tx(M(t, x)^2 - x^{-1}[x^{-1}]M(t, x)^2) + \frac{t}{x}(M(t, x)-x[x^1]M(t, x)).
\end{equation}
The first term corresponds to the atomic map, the second term encompasses both bridges and subpatch expansions (types II) and the third term is adding an inner quadrangle (for patches) or digon (for D-patches) (types I), and we make sure to remove the terms that would lead to a map of ``boundary length $0$''.

Recall that deleting the non-root edge of the root face of a patch with boundary of length $2$ yields a labeled quadrangulation, and this transformation is bijective. So the generating function of labeled quadrangulation is $Q(t) = \frac1t[x^1]P(t,x) = \frac1t[x^1]M(t,x)$.

Bousquet-Mélou and Elvey Price already obtained this result in a completely computational way (with a different but equivalent equation on $M$). They then interpreted the equation in terms of trees and conjectured the bijection we provided.

\subsection{Boltzmann sampling}
\label{sub:boltzmann}

In the section, we explain how to efficiently generate patches according to the \emph{Boltzmann distribution}, extending the method proposed in \cite{MBM_AEP_25} for colorful patches, to general labeled patches. We refer to \cite{DFLS04} for more details on Boltzmann sampling.

Fix $t>0$ such that $M(t, 1)$ converges and denote by $P_k(t) \coloneq [x^k]P(t,x)$ and $D_k(t) \coloneq [x^k]D(t, x)$. For $k\geq 1$, we want to sample patches with a boundary of length $2k$ and such that a given patch $\rp \in \mathcal{P}^{(k)}$ is sampled with probability $t^{e(\rp)}/P_k(t)$. 

To do so, we start by deriving equations for $P_k$ and $D_k$ from ~\eqref{eq:patches}:
\[\left\{\begin{array}{l}
    P_0(t) = 1 \text{ and } D_0(t)=0\\
    P_k(t) = tP_{k+1}(t) + t\sum_{j=0}^{k-1}P_j(t)P_{k-1-j}(t) +2t\sum_{j>0} D_j(t)P_{k+j-1}(t),  \text{ for } k>0, \\
    D_k(t) = t\one_{k=1} +  tD_{k-1}(t) + t\sum_{j=1}^{k-1}D_j(t)D_{k-j+1}(t) +2t\sum_{j>0} P_j(t)D_{k+j-1}(t), \text{ for } k>0.
\end{array}\right .\]

We can now use them to recursively sample patches as follows:
\begin{enumerate}[a)]
    \item If $k=0$ then return the atomic map.
    \item Otherwise, choose which construction to apply by drawing a variable $i \in \{1, 2, 3\}$ with probabilities $\P(1) = \frac{tP_{k+1}(t)}{P_k(t)}$ and $\P(2) = \frac{t\sum_{j=0}^{k-1}P_j(t)P_{k-1-j}(t)}{P_k(t)}$.
    \item \begin{enumerate}[1)]
        \item If $i=1$, sample a patch in $\mathcal{P}^{(k+1)}$ and add a quadrangle (construction I.1).
        \item If $i=2$, draw $j \in \{0, \ldots k-1\}$ with probability $\P(j) \propto P_j(t)P_{k-1-j}(t)$. Then sample a patch in $\mathcal{P}^{(j)}$, a patch in $\mathcal{P}^{(k-1-j)}$ and join them (construction II.1).
        \item If $i=3$, draw $j \in \N$ with probability $\P(j) \propto D_j(t)P_{k+j-1}(t)$ and $\varepsilon \in \{-1, 2\}$ uniformly. Then draw a patch in $\mathcal{P}^{(k+j-1)}$ and a D-patch in $\mathcal{D}^{(j)}$ and perform a subpatch expansion with shift $\varepsilon$ (constructions II.3.a and II.3.b).
    \end{enumerate}
\end{enumerate}
D-patch are generated similarly:
\begin{enumerate}[a)]
    \item Choose which construction to apply by drawing a variable $i \in \{0, 1, 2, 3\}$ with probabilities $\P(0) = \frac{t\one_{k=1}}{D_k(t)}$, $\P(1) = \frac{tD_{k-1}(t)}{D_k(t)}$ and $\P(2) = \frac{t\sum_{j=0}^{k-1}D_j(t)D_{k-j+1}(t)}{D_k(t)}$.
    \item \begin{enumerate}
        \item[0)] If $i=0$ (and so $k=1$) return the map made of a single $(0,1)$ edge.
        \item[1)] If $i=1$, sample a D-patch in $\mathcal{D}^{(k-1)}$ and add a digon (construction I.2).
        \item[2)] If $i=2$, draw $j \in \{0, \ldots k-1\}$ with probability $\P(j) \propto D_j(t)D_{k-j+1}(t)$. Then sample a D-patch in $\mathcal{D}^{(j)}$, a D-patch in $\mathcal{D}^{(k-j+1)}$ and join them (construction II.2).
        \item[3)] If $i=3$, draw $j \in \N$ with probability $\P(j) \propto P_j(t)D_{k+j+1}(t)$ and $\varepsilon \in \{-1, 2\}$ uniformly. Then draw a patch in $\mathcal{P}^{(j)}$ and a D-patch in $\mathcal{D}^{(k+j+1)}$ and perform a subpatch expansion with shift $\varepsilon$ (constructions II.4.a and II.4.b).
    \end{enumerate}
\end{enumerate}

To be able to effectively use this algorithm, we need to be able to evaluate the series at a given point $t$. We refer to \cite{MBM_AEP_25} for discussion on this matter.

Equivalently, we could also first sample well-charged unary-binary trees of a given charge in the same fashion and then apply the bijection.

\subsection{Leaves and extrema}
\label{sub:leaves}

In \cite{MBM_AEP_25} Bousquet-Mélou and Elvey Price showed that the series of well-charged trees with charge $1$, counted by their number of vertices and their number of right leaves is equal to the one of labeled quadrangulation counted by their number of edges and their number of local minima. They also have computational indications that this remains true when considering the joint distribution of both the number of left and right leaves on one hand and both the number of local maxima and local minima on the other hand.

We strongly believe that the equality for the joint distribution is true not only for well-charged trees of charge one, but also for any well-charged trees with positive charge and patches. And, even for well-charged trees with negative charge and D-patches, with an adapted notion of local extrema (namely, where the root vertex and its neighbours do not count). Unfortunately, our recursive approach fails at preserving these statistics, 
even when considering only local minima/right leaves.

Let us be more precise. First, in the fully positive case, the
situation is much simpler: all labels of the (quasi-)quadrangulation are $0$ or $1$, right leaves are
mapped to vertices labeled~$0$, left leaves to vertices labeled~$1$,
and these vertices are precisely the local minima and local maxima,
respectively. We even have the stronger property that the extrema of a map are exactly those of the submaps it decomposes into.

However, this fails for the general bijection. More precisely, the
operations of types I.1, I.2, II.1 and II.2 preserve the labels in the
neighbourhood of every vertex, and hence preserve the status of a
vertex as a local extremum. The obstruction comes from the mixed-sign
cases, namely the subpatch expansion operations. Indeed, during a subpatch expansion with shift $-1$, while minima behave well, some maxima on the boundary of $\rp$ may not be maxima anymore in $\rm$ and some vertices that were adjacent to the root vertex of $\rd$ may become maxima in $\rm$.
With the shift $+2$, the symmetric issue
occurs for local minima. In either case, the recursive construction
does not keep track of the number of extrema created or destroyed.

Thus, although our bijection explains the unrefined enumeration, it
does not provide a bijective explanation of the refined distributions
observed in~\cite{MBM_AEP_25}. Constructing a bijection that preserves these
statistics remains a fascinating open problem. Such a bijection would
likely have to be more canonical than the recursive one presented here;
in particular, the desired statistic-preserving property may provide a
useful guide in the search for a genuinely non-recursive construction.

\appendix

\section{Non-recursive bijection in the fully positive case}
\label{app:explo}

In this section, we give a direct (i.e. non-recursive) description of the bijection of Section~\ref{sec:positive} between quadrangulations canonically labeled by $0$ and $1$ and fully positive trees.

In the general case, we also describe how to construct the well-charged tree directly on the map through an exploration process. Yet, in this situation, reconstructing the map from the resulting tree is rather cumbersome and does not shed much light on the structure. Another approach would be needed to obtain a genuinely non-recursive bijection.

\subsection{The trees as exploration paths - Fully positive case}

The rough idea to build the tree is the following: we put a vertex on each edge and a leaf on each vertex, then, starting from the root edge, we send particles that explore the faces of the map, adding edges corresponding to the path they follow (see Figure~\ref{fig:explo}). More precisely, the exploration is done as follows:

Initialize the current edge $e$ to be the root edge. We denote by $v_0$ and $v_1$ its extremities, with $\ell(v_i)=i$. While there are uncut edges, repeat the following:
\begin{itemize}
    \item[] Cross $e$ to discover a new face and cut $e$.
    \item[1)] If a new face is discovered (i.e. if $e$ is not a bridge), then go to the next edge around $v_0$ in the counterclockwise direction. 
    \item[2)] Otherwise, split the exploring particle: send one copy to the next edge around $v_0$ turning counterclockwise, and the other to the next edge around $v_1$ turning clockwise. If no such edge exists, attach a leaf to the corresponding vertex and terminate the exploration of that particle on it. Observe that each particle then evolves in a different connected component, so that the order in which they are explored does not matter.
\end{itemize}

\begin{figure}[ht]
    \centering
    \includegraphics[page=1, width=0.7\textwidth]{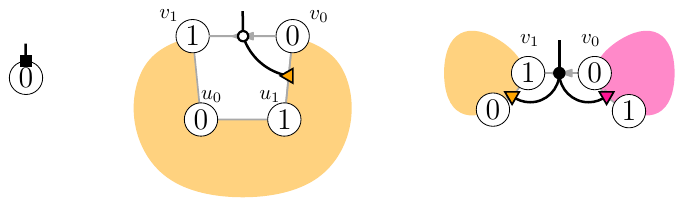}
    \caption{How to explore the quasi-quadrangulation to build the tree.}
    \label{fig:explo}
\end{figure}

It should be clear that this construction produces exactly the same tree as the recursive bijection: each vertex of the tree corresponds to the root edge of the submap it represents. Figure~\ref{fig:explo ex} provides a concrete example of the construction.

\begin{figure}[ht]
    \centering
    \includegraphics[page=2, width=0.8\textwidth]{Figures/Exploration.pdf}
    \caption{An example of how to built the tree by exploring the quasi-quadrangulation and how to recover the map.}
    \label{fig:explo ex}
\end{figure}

\subsection{Drawing the map on the tree}
The quasi-quadrangulation can also be recovered directly from the tree, and can even be ``drawn'' on top of it. To do so, we explore the tree from its leaves toward its root, and add one edge for each inner vertex (see Figure~\ref{fig:explo ex} for an example). More precisely, the procedure is as follows:

\begin{itemize}
    \item Convert each right (resp.left) leaf into a vertex labeled $0$ (resp. $1$). 
    \item Then, proceeding from the leaves toward the root, add one edge for each vertex $v$ as follows:
    \begin{itemize}
        \item If $v$ is binary, the new edge connects the first corner labeled $0$ encountered when performing a \emph{clockwise} contour of the connected component of the submap built so far for the subtree rooted at the right child of $v$, and the first corner labeled $1$ encountered when performing a \emph{counterclockwise} contour of the submap corresponding to the left child of $v$. 
        \item If $v$ is unary, the new edge connects the first corner labeled $0$ encountered when performing a clockwise contour of the connected component of the map built so far starting from $v$'s unique child, and the \emph{second} corner labeled $1$  when performing a counterclockwise contour also starting from $v$'s unique child. 
    \end{itemize}
\end{itemize}
It follows from the construction, that there is a unique way to add these edges, so that each edge crosses the tree exactly at its corresponding vertex, and such that no two added edges intersect. 

Again, it can be checked that this that this procedure recovers exactly the operation described in the recursive bijection. The fact that the construction always succeeds -- meaning that at each step the required vertices or corners labeled 0 and 1 indeed exist -- follows from the definition of well-charged trees, by the same arguments used in the recursive construction.

\subsection{The trees as exploration paths - General case}

In the general case, it is still possible to draw the trees directly on the map through an exploration of the faces as depicted in Figure~\ref{fig:explo general}. We still put a node per edge and a leaf per vertex, and the children of a node are still the root edges of the one or two submaps the map decomposes into. The subtlety is that during the exploration we need to remember whether we are in a patch or D-patch in order to be able to differentiate type I.1 from types II.2 and II.4.a and type II.3.a from type II.4.b. Another difficulty is that the digons will not be visible as we do not want to make the subpatch contraction. A solution is to, instead of contracting the patch, cut or color all the edges attached to it and remember those are supposed to be adjacent to the root vertex of the D-patch. Figure~\ref{fig:explo gen ex} provides a concrete example.

\begin{figure}[ht]
    \centering
    \includegraphics[page=3, width=\textwidth]{Figures/Exploration.pdf}
    \caption{How to draw the trees by exploring the map in the general case.}
    \label{fig:explo general}
\end{figure}

\begin{figure}[ht]
    \centering
    \includegraphics[page=4, width=0.5\textwidth]{Figures/Exploration.pdf}
    \caption{An example of the exploration in the general case. Red edges are created when exploring a patch and blue edges when exploring a D-patch. Dashed edges of the map are those that end up adjacent to the root vertex of a D-patch. Colors on the vertices match the color coding of the types in Figure~\ref{fig:explo general}.}
    \label{fig:explo gen ex}
\end{figure}

It is certainly possible to describe a process that builds the map directly on the tree, but the operations of subpatch expansion make it quite complicated. Again, digons should be replaced by dandling half edges, to avoid the ghost root vertex of D-patches.

\bibliographystyle{amsalpha}
\bibliography{EulerianOrientations}

\end{document}